\newtheorem{theorem}{Theorem}[section]
\newtheorem{lemma}[theorem]{Lemma}
\newtheorem{proposition}[theorem]{Proposition}
\newtheorem{corollary}[theorem]{Corollary}
\theoremstyle{definition}
\newtheorem{definition}[theorem]{Definition}
\newtheorem{example}[theorem]{Example}
\theoremstyle{remark}
\newtheorem{remark}[theorem]{Remark}
\numberwithin{equation}{section}
\begin{document}
\setcounter{page}{1}

\title[Orthogonality and ordered Banach spaces]{Orthogonality in $\ell _p$-spaces and its bearing on ordered Banach spaces}

\author[A. K. Karn]{Anil Kumar Karn$^1$}

\address{$^{1}$ School of Mathematical Science, National Institute of Science Education and Research, Institute of Physics Campus, 
Sachivalaya Marg, P. O. Sainik School, Bhubaneswar- 751005, India.}
\email{\textcolor[rgb]{0.00,0.00,0.84}{anilkarn@niser.ac.in; anil.karn@gmail.com}}

\subjclass[2010]{Primary 46B40; Secondary 46B45, 47B60.}

\keywords{$p$-orthogonality, total $p$-orthonormal sets, order smooth $p$-normed spaces, $p$-orthogonal decomposition.}

\begin{abstract}
We introduce a notion of $p$-orthogonality in a general Banach space $1 \le p \le \infty$. 
We use this concept to characterize $\ell _p$-spaces among Banach spaces and also among complete 
order smooth $p$-normed spaces. We further introduce a notion of $p$-orthogonal decomposition in 
order smooth $p$-normed spaces. We prove that if the $\infty$-orthogonal decomposition holds in 
an order smooth $\infty$-normed space, then the $1$-orthogonal decomposition holds in the dual 
 space. We also give an example to show that the above said decomposition may not be unique.
\end{abstract} 
\maketitle

\section{Introduction}

Let $H$ be a real Hilbert space. For $x, y \in H$ we have $x \perp y$ ($x$ is orthogonal to $y$) 
if and only if $\langle x, y \rangle = 0$. Note that $x \perp y$ if and only if $\Vert x + ky 
\Vert ^2 = \Vert x \Vert ^2 + \Vert ky \Vert ^2$ for all $k \in \mathbb{R}$. This idea easily 
extends to $\ell_p$-spaces and more generally to all normed linear spaces for $1 \le p \le \infty$. 
Let us recall there are several notions of orthogonality in literature \cite{GB, RJ, BR, IS}. In 
this paper, we consider a special case to suit our model.

We introduce the notion of $p$-orthogonality in a general Banach space $1 \le p \le \infty$ 
(definitions given below). We use this concept to characterize $\ell _p$-spaces among Banach spaces. We further note that this concept has a natural bearing on the order structure in order smooth $p$-normed spaces. As a consequence, we specialize the above said characterization of 
$\ell -p$-spaces among complete order smooth $p$-normed spaces.

The notion of $p$-orthogonality  fits well with the decomposition of self-adjoint elements as 
differences of positive elements in a $C^{\ast}$-algebra or its dual \cite{KR, GP}. In this paper, 
we generalize to order smooth $\infty$-normed spaces as a duality result. More precisely, we prove 
that if the $\infty$-orthogonal decomposition holds in an order smooth $\infty$-normed space, then 
the $1$-orthogonal decomposition holds in the dual space.

We end this paper with an example to show that the above said decomposition may not be unique and 
hence needs further exploitations to characterize the decomposition in $C^{\ast}$-algebras.

\section{Orthogonality in $\ell _p$-spaces}

Let $1 \le p \le \infty$ and let $X$ be a Banach space. For $x, y \in X$ we say that $x$ is 
$p$-orthogonal to $y$, ( $x \perp _p y$ ), if 
$$ \Vert x + ky \Vert ^p = \Vert x \Vert ^p + \Vert ky \Vert ^p, \quad 1 \le p < \infty$$ 
and
$$\Vert x + ky \Vert = \max \{ \Vert x \Vert , \Vert ky \Vert \} , \quad p = \infty$$ 
for all $k \in \mathbb{R}$.

Further, we say that $\perp _p$ is additive on $X$, if $x \perp _p y$ and $x \perp _p z$ implies 
$x \perp _p (y+z)$. Note that in this case $x^{\perp _p} = \{ y \in X: x \perp _p y \}$ is a 
subspace of $X$.

A subset $S$ of $X$ is called $p$-orthogonal if $0 \not\in S$ and $x \perp _p y$ if $x, y \in S$ 
with $x \not= y$. If, in addition, $\Vert x \Vert = 1$ for all $x \in S$, we say that $S$ is a 
$p$- orthonormal set in $X$. We say that $S$ is total if the linear span of $S$ is dense in $X$. 

Let us note that a $p$-orthogonal set $U$ in $X$ is linearly independent. In fact, for $x_1, 
\dots , x_n \in U$ and $\alpha _1, \dots , \alpha _n \in \mathbb{R}$, we have
$$\Vert \sum _{k=1}^n \alpha _k x_k \Vert ^p = \sum _{k=1}^n \Vert \alpha _k x_k \Vert ^p$$ 
when $1 \le p < \infty$ and for $p = \infty$
$$\Vert \sum _{k=1}^n \alpha _k x_k \Vert = \max _{k=1}^n \Vert \alpha _k x_k \Vert .$$ 
Now, it is routine to prove the following characterization of $\ell _p$-spaces.

\begin{theorem}
Let $1 \le p \le \infty$. If $E = \{ e_i : i \in I \}$ is the standard basis for $\ell _p(I)$. 
Then $\perp _p$ is additive in $X$ and $E$ is a total $p$-orthonormal set in $\ell _p(I)$. 
Conversely, let $\perp _p$ be additive in a Banach space $X$ and let  $U$ is a total 
$p$-orthonormal set in $X$. Then $X$ is isometrically isomorphic to $\ell _p(U)$. For 
$p = \infty$, we replace $\ell _p$ by $c_0$.
\end{theorem}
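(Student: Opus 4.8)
The plan is to treat the two implications separately, using the pairwise identity recorded just before the theorem as the engine for the harder (converse) direction.

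For the forward direction I would first dispose of the easy assertions: for $i\neq j$ the vectors $e_i,e_j\in\ell_p(I)$ have disjoint support, so a one-line computation gives $\Vert e_i+ke_j\Vert_p^p=1+|k|^p$ (respectively $\max\{1,|k|\}$ when $p=\infty$), whence $e_i\perp_p e_j$ and $E$ is $p$-orthonormal; totality is immediate because the finitely supported sequences are dense in $\ell_p(I)$ (and in $c_0(I)$ when $p=\infty$). The substantive point is additivity of $\perp_p$. For $1\le p<\infty$ with $p\neq 2$ I would prove the sharper statement that $x\perp_p y$ holds in $\ell_p(I)$ if and only if $x$ and $y$ have disjoint supports; additivity then follows at once since $\mathrm{supp}(y+z)\subseteq\mathrm{supp}(y)\cup\mathrm{supp}(z)$. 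The nontrivial implication is that orthogonality forces disjoint support: writing $h(k)=\Vert x+ky\Vert_p^p-\Vert x\Vert_p^p-|k|^p\Vert y\Vert_p^p=\sum_i\bigl(|x_i+ky_i|^p-|x_i|^p-|k|^p|y_i|^p\bigr)$, only the coordinates with $x_i\neq0\neq y_i$ contribute, and I would examine $h$ near $k=0$: for $p>2$ the second derivative $h''(0)=p(p-1)\sum_{x_i,y_i\neq0}|x_i|^{p-2}y_i^2$ is strictly positive unless there is no overlap, while for $1<p<2$ the overlapping coordinates produce a strictly negative leading term of order $|k|^p$; in either case $h\not\equiv0$ once the supports meet. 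For $p=1$ the same conclusion comes from comparing the one-sided derivatives of $\Vert x+ky\Vert_1$ at $k=0^\pm$. The case $p=2$ is handled separately through the inner product, where $\perp_2$ is ordinary orthogonality and additivity is classical.

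For the converse I am given that $\perp_p$ is additive and that $U$ is a total $p$-orthonormal set. The first step is to upgrade pairwise orthogonality to the full identity $\Vert\sum_{k=1}^n\alpha_k u_k\Vert^p=\sum_{k=1}^n|\alpha_k|^p$ (respectively $\max_k|\alpha_k|$ for $p=\infty$) for distinct $u_1,\dots,u_n\in U$: since scaling is built into the definition of $\perp_p$, additivity lets me peel off one vector, as $u_1\perp_p u_j$ for every $j\ge2$ yields $u_1\perp_p\sum_{j\ge2}\alpha_j u_j$, and an induction on $n$ closes the identity. This identity says precisely that the linear map $T$ sending the unit vector at $u$ to $u$ is isometric on the finitely supported elements of $\ell_p(U)$. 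I would then extend $T$ to all of $\ell_p(U)$: for $(\alpha_u)\in\ell_p(U)$ the partial sums of $\sum_u\alpha_u u$ are Cauchy in $X$ because their norms are controlled by the tails of $\sum|\alpha_u|^p$, so completeness of $X$ gives convergence and $T$ extends to a linear isometry. Finally $T(\ell_p(U))$ is complete, hence closed in $X$, and contains the dense linear span of $U$; therefore $T$ is onto, i.e.\ an isometric isomorphism $\ell_p(U)\cong X$. For $p=\infty$ the finitely supported vectors are dense in $c_0(U)$ rather than $\ell_\infty(U)$, so the same argument produces $c_0(U)\cong X$.

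The main obstacle I anticipate is the additivity of $\perp_p$ in the forward direction when $p=\infty$: there the sup-norm condition $\Vert x+ky\Vert_\infty=\max\{\Vert x\Vert_\infty,|k|\Vert y\Vert_\infty\}$ is genuinely weaker than disjoint support, so the clean support characterization available for $p\neq2,\infty$ is unavailable and the max-condition must be analyzed directly; this case needs the most care and I would verify it last. By contrast the converse direction is comparatively mechanical once the isometric identity is in hand, the only points requiring attention being the convergence of the defining series (which rests on completeness) and the passage from a dense isometric image to surjectivity.
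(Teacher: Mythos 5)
Your converse direction is sound and is essentially the argument the paper intends: additivity together with the scaling invariance of $\perp_p$ gives $\Vert\sum_{k}\alpha_k u_k\Vert^p=\sum_k|\alpha_k|^p$ (resp.\ the max identity) by induction, and completeness plus totality extends the resulting isometry from the span of $U$ to an isomorphism onto $\ell_p(U)$ (resp.\ $c_0(U)$). The paper offers nothing beyond "it is routine," so for $1\le p<\infty$ your forward direction --- disjoint supports characterize $\perp_p$ when $p\ne 2$, the inner product handles $p=2$ --- actually supplies detail the paper omits, and those arguments are correct modulo justifying differentiation under the infinite sum.

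The genuine gap is exactly the case you postponed: additivity of $\perp_\infty$ in $c_0(I)$. This cannot be "verified last," because it is false once $|I|\ge 3$. Take
$$x=(1,\tfrac12,0,0,\dots),\qquad y=(0,\tfrac12,1,0,\dots),\qquad z=(0,\tfrac12,-1,0,\dots).$$
Since $|\tfrac12+\tfrac k2|=\tfrac{|1+k|}{2}\le\max(1,|k|)$ for every $k$, we get $\Vert x+ky\Vert_\infty=\max\bigl(1,|\tfrac12+\tfrac k2|,|k|\bigr)=\max(1,|k|)=\max(\Vert x\Vert_\infty,\Vert ky\Vert_\infty)$, so $x\perp_\infty y$, and the identical computation gives $x\perp_\infty z$. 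But $y+z=(0,1,0,\dots)$ has norm $1$, while $\Vert x+(y+z)\Vert_\infty=\max(1,\tfrac32)=\tfrac32\ne 1=\max(\Vert x\Vert_\infty,\Vert y+z\Vert_\infty)$, so $x\not\perp_\infty(y+z)$. Your own observation that the sup-norm condition is "genuinely weaker than disjoint support" is precisely where the forward assertion breaks: for $p=\infty$ no proof strategy will close this step, and you should record a counterexample to that clause of the theorem instead of searching for one. (The converse direction for $p=\infty$ is unaffected, since there additivity is a hypothesis; note also that the paper's preliminary claim that a merely pairwise $p$-orthogonal set satisfies the $n$-term norm identity already presupposes additivity, which your induction makes explicit.)
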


In fact, it is clear from the observation made before theorem 2.1 that the linear span of $U$ is 
isometrically isomorphic to $c_{00}(U)_p$. This observation also leads us to a natural order 
structure (on a subspace) that corresponds to a $p$-orthogonal set $U$ in a normed linear 
space $X$ (when $\perp _p$ is additive on $X$). We just need to recall the appropriate definitions 
and result \cite{AK}.
\begin{definition}
Let $\left( V, V^+ \right)$ be a real ordered vector space such that $V^+$ is proper and 
generating and let $\Vert~\Vert$ be a norm on $V$ such that $V^+$ is $\Vert~\Vert$-closed. For a 
fixed real number $p$, $1 \le p < \infty$, consider the following conditions on $V$:
\begin{enumerate}
\item[(O.p.1)] For $u, v, w \in V$ with $u \le v \le w$, we have 
$\Vert v \Vert \le (\Vert u \Vert ^p + \Vert w \Vert ^p)^{1/p}$.
\item[(O.p.2)] For $v \in V$ and $\epsilon >0$, there are $u_1, u_2 \in V^+$ 
such that $v = u_1 - u_2$ and 
$(\Vert u_1 \Vert ^p + \Vert u_2 \Vert ^p)^{1/p} \le \Vert v \Vert + \epsilon $.

For $p = \infty$, further consider the following conditions on $V$:

\item[(O.$\infty$.1)] For $u, v, w \in V$ with $u \le v \le w$, we have 
$\Vert v \Vert \le \max \{ \Vert u \Vert , \Vert w \Vert \}$.
\item[(O.$\infty$.2)] For $v \in V$ and $\epsilon >0$, there are $u_1, u_2 \in V^+$ 
such that $v = u_1 - u_2$ and 
$\max \{ \Vert u_1 \Vert , \Vert u_2 \Vert \} \le \Vert v \Vert + \epsilon $.
\end{enumerate}
\end{definition}
\begin{theorem}
Let $\left( V, V^+ \right)$ be a real ordered vector space such that $V^+$ is proper 
and generating and let $\Vert \Vert$ be a norm on $V$ such that $V^+$ is 
$\Vert \Vert$-closed. For a fixed $p$, $1 \le p \le \infty$, we have
\begin{enumerate}
\item $\Vert~\Vert$ satisfies the condition $O.p.1$ on $V$ if and only if 
$\Vert~\Vert ^{\prime}$ satisfies the condition $O.p^{\prime}.2$ on $V^{\prime}$.
\item $\Vert~\Vert$ satisfies the condition $O.p.2$ on $V$ iv and only if 
$\Vert~\Vert ^{\prime}$ satisfies the condition $O.p^{\prime}.1$ on $V^{\prime}$.
\end{enumerate}
\end{theorem}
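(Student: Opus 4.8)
The plan is to split each biconditional into its two implications and handle them by two quite different mechanisms: the two ``decomposition $\Rightarrow$ interval'' directions ($(O.p.2)$ on $V$ giving $(O.p'.1)$ on $V^{\prime}$, and $(O.p'.2)$ on $V^{\prime}$ giving $(O.p.1)$ on $V$) are elementary Hölder estimates, while the two ``interval $\Rightarrow$ decomposition'' directions are polarity arguments. Here $p^{\prime}$ is the conjugate index, $V^{\prime +}=\{g\in V^{\prime}: g(u)\ge 0 \text{ for all } u\in V^+\}$ is the ($w^{\ast}$-closed, proper, generating) dual cone, and $\Vert\cdot\Vert^{\prime}$ is the dual norm. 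For the easy directions, to estimate $\Vert v\Vert=\sup\{g(v):\Vert g\Vert^{\prime}\le 1\}$ under $u\le v\le w$, I decompose the test functional $g=g_1-g_2$ with $g_i\ge 0$ via $(O.p'.2)$ and use $g_1(v)\le g_1(w)$, $g_2(v)\ge g_2(u)$ to get $g(v)\le\Vert g_1\Vert^{\prime}\Vert w\Vert+\Vert g_2\Vert^{\prime}\Vert u\Vert$; Hölder with exponents $p^{\prime},p$ then yields $(O.p.1)$, and the mirror-image computation (decomposing $v=u_1-u_2$ in $V$) gives $(O.p'.1)$ from $(O.p.2)$. At the endpoints these reduce to the conventions $\max$ and sum, all governed by the elementary identity $\sup\{sa+tb:s,t\ge 0,\ s^{p'}+t^{p'}\le 1\}=(a^p+b^p)^{1/p}$ for $a,b\ge 0$.

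The backbone of the hard directions is a pair of no-duality-gap identities, which I would prove \emph{without} using either $(O.p.1)$ or $(O.p.2)$:
\[ \inf\{\Vert w\Vert : w\ge v\}=\sup\{g(v): g\in V^{\prime +},\ \Vert g\Vert^{\prime}\le 1\}\qquad(v\in V), \]
\[ \inf\{\Vert h\Vert^{\prime}: h\ge g\}=\sup\{g(u): u\in V^+,\ \Vert u\Vert\le 1\}\qquad(g\in V^{\prime}). \]
In each, the inequality ``$\ge$'' is immediate from positivity. For ``$\le$'' in the first, note that $v\mapsto\inf\{\Vert w\Vert:w\ge v\}=\mathrm{dist}(v,-V^+)$ is sublinear and dominated by $\Vert\cdot\Vert$, so Hahn--Banach furnishes a supporting functional that is positive and of norm $\le 1$. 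The second identity is the subtle one, and I expect it to be the main obstacle: a naive Hahn--Banach argument produces a supporting functional in $V^{\prime\prime}$, not in $V$, which would open a genuine duality gap. The fix is to separate $g$ from the set $\rho B_{V^{\prime}}-V^{\prime +}$ (with $\rho$ the right-hand supremum) in the topology $\sigma(V^{\prime},V)$; since this set is $w^{\ast}$-closed (a $w^{\ast}$-compact ball minus a $w^{\ast}$-closed cone) and the dual of $(V^{\prime},w^{\ast})$ is $V$ itself, the separating functional is automatically an element of $V$, indeed of $V^+$, closing the gap inside the pairing $\langle V,V^{\prime}\rangle$.

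With these identities in hand the hard directions become bipolar arguments. For $(O.p.1)\Rightarrow(O.p'.2)$ I set $C=\{g_1-g_2: g_i\in V^{\prime +},\ (\Vert g_1\Vert^{\prime p'}+\Vert g_2\Vert^{\prime p'})^{1/p'}\le 1\}\subseteq V^{\prime}$ and compute its polar in $V$, obtaining $C^{\circ}=\{v:(\phi(v)^p+\phi(-v)^p)^{1/p}\le 1\}$ where $\phi(v)=\sup\{g(v):g\in V^{\prime +},\ \Vert g\Vert^{\prime}\le 1\}$. By the first identity $\phi(v)=\inf\{\Vert w\Vert:w\ge v\}$, and applying $(O.p.1)$ to the order interval $-w^{\prime}\le v\le w$ (any $w\ge v$, $w^{\prime}\ge -v$) and infimizing gives $\Vert v\Vert\le(\phi(v)^p+\phi(-v)^p)^{1/p}$, that is $C^{\circ}\subseteq B_V$. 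The bipolar theorem then yields $B_{V^{\prime}}\subseteq C^{\circ\circ}=C$ — here $C$ is $w^{\ast}$-compact, being the image of a $w^{\ast}$-compact set under $(g_1,g_2)\mapsto g_1-g_2$ — so every $f\in B_{V^{\prime}}$ decomposes exactly, and scaling delivers $(O.p'.2)$. The implication $(O.p'.1)\Rightarrow(O.p.2)$ is completely parallel, with $D=\{u_1-u_2:u_i\in V^+,\ (\Vert u_1\Vert^p+\Vert u_2\Vert^p)^{1/p}\le 1\}\subseteq V$, using the second identity for $\Phi(g)=\sup\{g(u):u\in V^+,\ \Vert u\Vert\le 1\}$ to get $D^{\circ}\subseteq B_{V^{\prime}}$ and hence $B_V\subseteq D^{\circ\circ}=\overline{D}$.

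The remaining difficulty is a genuine asymmetry: $D$ lives in $V$, not in a dual, so only its norm-closure is available, never compactness. This is precisely why $(O.p.2)$ is stated with an $\varepsilon$: one cannot in general decompose $v$ exactly, only approximately. To pass from $B_V\subseteq\overline{D}$ to the approximate decomposition in $(O.p.2)$, I would run a routine successive-approximation (geometric-series) argument, writing the small remainder of each step as a difference of positives of uniformly controlled norm — which is where the completeness of $V$ and the bounded-decomposition property of a closed generating cone enter. Thus the two genuinely load-bearing points are the $w^{\ast}$-separation that prevents a duality gap in the second identity, and the compactness-versus-closure asymmetry between $C$ and $D$ that accounts for the exactness of $(O.p'.2)$ against the approximate nature of $(O.p.2)$.
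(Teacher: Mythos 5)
First, a point of reference: the paper itself offers no proof of this statement --- it is recalled verbatim from the reference [AK] (``A $p$-theory of ordered normed spaces'') --- so there is no in-paper argument to compare against. Your architecture (H\"{o}lder estimates for the two ``easy'' implications, polar/bipolar computations for the two ``hard'' ones) is the standard Grosberg--Krein/Ellis/Ng-style duality argument, and for three of the four implications it is correct and essentially complete as sketched: the computation $C^{\circ}=\{v:(\phi(v)^p+\phi(-v)^p)^{1/p}\le 1\}$, the Hahn--Banach proof of $\phi(v)=\inf\{\Vert w\Vert : w\ge v\}$ via the sublinear functional $\mathrm{dist}(\cdot,-V^+)$, the $w^{\ast}$-separation proof of the second identity (which correctly uses that $V^+$ is norm-closed, hence weakly closed, so that the separating vector lies in $V^+$), and the $w^{\ast}$-compactness of $C$ that upgrades $C^{\circ\circ}$ to $C$ itself all check out. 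Indeed your argument for $(O.p.1)\Rightarrow(O.p^{\prime}.2)$ delivers the exact decomposition (OS.$p^{\prime}$.2) with no $\epsilon$, consistent with Theorem 2.4 of the paper.

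The one genuine gap is the final step of $(O.p^{\prime}.1)\Rightarrow(O.p.2)$. Your bipolar argument legitimately yields $B_V\subseteq\overline{D}$ (norm closure, via Mazur), but you then invoke ``the completeness of $V$'' to run the geometric-series argument converting membership in $\overline{D}$ into an exact algebraic decomposition $v=u_1-u_2$ with $(\Vert u_1\Vert^p+\Vert u_2\Vert^p)^{1/p}\le\Vert v\Vert+\epsilon$. Completeness is not among the hypotheses: $V$ is only a normed ordered vector space with a proper, generating, closed cone. Without it, the series $\sum_k u_i^{(k)}$ produced by successive approximation need not converge in $V$, and the alternative of absorbing each remainder into a decomposition of controlled norm is exactly And\^{o}'s bounded-decomposition property, which itself rests on Baire category and hence again on completeness. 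The conditions ``$B_V\subseteq\overline{D}$'' and (O.p.2) coincide for Banach spaces but are not obviously equivalent for incomplete $V$; so as written your proof establishes part (2)($\Leftarrow$) only under an added completeness assumption. You should either close this gap with an argument that never leaves the exact decomposition set $D$ (I do not see one along your route), or state explicitly that this implication is proved for complete $V$ --- which is, in fairness, essentially the only setting in which the present paper applies it, since the decomposition conditions are invoked on dual spaces or on spaces assumed norm-complete.
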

\begin{definition}
Let $\left( V, V^+ \right)$ be a real ordered vector space such that $V^+$ is proper 
and generating and let $\Vert \Vert$ be a norm on $V$ such that $V^+$ is 
$\Vert \Vert$-closed. For a fixed $p$, $1 \le p \le \infty$, we say that $V$ is an
order smooth $p$-normed space, if $\Vert \Vert$ satisfies conditions $O.p.1$ and
$O.p.2$ on $V$.
\end{definition}
\begin{theorem}
Let $\left( V, V^+ \right)$ be a real ordered vector space such that $V^+$ is proper and 
generating and let $\Vert \Vert$ be a norm on $V$ such that $V^+$ is $\Vert \Vert$-closed. For a 
fixed $p$, $1 \le p \le \infty$, $V$ is an order smooth $p$-normed space if and only if its 
Banach dual $V^{\prime}$ is an order smooth $p^{\prime}$-normed space satisfying the condition 
\begin{enumerate}
\item[(OS.$p^{\prime}$.2)] For $f \in V^{\prime}$, there are $g_1, g_2 \in V^{\prime +}$ 
such that $\Vert g_1 \Vert ^{p^{\prime}} + \Vert g_2 \Vert ^{p^{\prime}} = 
\Vert v \Vert^{p^{\prime}}$ and $f = g_1 - g_2$.\\
(A similar modification in the case of $p = 1$.)
\end{enumerate}
\end{theorem}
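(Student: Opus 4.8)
The plan is to treat the two implications separately and to recognise that the core equivalence is a direct transcription through the duality of Theorem 2.3, the only genuinely new ingredient being the passage from the approximate decomposition $O.p^{\prime}.2$ to the exact decomposition (OS.$p^{\prime}$.2) on the dual, which is where the weak-* compactness of $V^{\prime}$ enters.

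For the forward implication, assume $V$ is an order smooth $p$-normed space, so that both $O.p.1$ and $O.p.2$ hold on $V$. By the first equivalence of Theorem 2.3, $O.p.1$ on $V$ gives $O.p^{\prime}.2$ on $V^{\prime}$; by the second, $O.p.2$ on $V$ gives $O.p^{\prime}.1$ on $V^{\prime}$. Thus $V^{\prime}$ satisfies both $O.p^{\prime}.1$ and $O.p^{\prime}.2$ and is an order smooth $p^{\prime}$-normed space. It remains to upgrade $O.p^{\prime}.2$ to (OS.$p^{\prime}$.2). Fixing $f \in V^{\prime}$, for each $\epsilon > 0$ condition $O.p^{\prime}.2$ supplies $g_1^{\epsilon}, g_2^{\epsilon} \in V^{\prime +}$ with $f = g_1^{\epsilon} - g_2^{\epsilon}$ and $\left( \Vert g_1^{\epsilon} \Vert^{p^{\prime}} + \Vert g_2^{\epsilon} \Vert^{p^{\prime}} \right)^{1/p^{\prime}} \le \Vert f \Vert + \epsilon$.

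Since the norms $\Vert g_i^{\epsilon} \Vert$ are uniformly bounded, Banach--Alaoglu lets me pass to a weak-* convergent subnet with limits $g_1, g_2 \in V^{\prime}$. Weak-* closedness of $V^{\prime +}$ gives $g_i \in V^{\prime +}$, and weak-* continuity of subtraction forces $f = g_1 - g_2$. Refining the subnet so that $\Vert g_i^{\epsilon} \Vert \to a_i$, the constraint passes to the limit as $a_1^{p^{\prime}} + a_2^{p^{\prime}} \le \Vert f \Vert^{p^{\prime}}$, while weak-* lower semicontinuity of the norm gives $\Vert g_i \Vert \le a_i$; hence $\Vert g_1 \Vert^{p^{\prime}} + \Vert g_2 \Vert^{p^{\prime}} \le \Vert f \Vert^{p^{\prime}}$. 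For the reverse inequality I read off $-g_2 \le f \le g_1$ from positivity and apply $O.p^{\prime}.1$ on $V^{\prime}$ to obtain $\Vert f \Vert \le \left( \Vert g_1 \Vert^{p^{\prime}} + \Vert g_2 \Vert^{p^{\prime}} \right)^{1/p^{\prime}}$. Combining the two inequalities delivers the equality $\Vert g_1 \Vert^{p^{\prime}} + \Vert g_2 \Vert^{p^{\prime}} = \Vert f \Vert^{p^{\prime}}$, which is (OS.$p^{\prime}$.2). The case $p = 1$ (so $p^{\prime} = \infty$) is identical after replacing the sum of $p^{\prime}$-th powers by a maximum throughout.

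For the converse, assume $V^{\prime}$ is an order smooth $p^{\prime}$-normed space, so both $O.p^{\prime}.1$ and $O.p^{\prime}.2$ hold on $V^{\prime}$; reading the two equivalences of Theorem 2.3 in the opposite direction then gives $O.p.1$ and $O.p.2$ on $V$, so $V$ is order smooth $p$-normed. I expect the weak-* compactness step of the forward implication to be the main obstacle, since one must simultaneously keep the limiting pair inside the cone (closedness of $V^{\prime +}$) and control its norm from above, by lower semicontinuity, and from below, via $O.p^{\prime}.1$; the remainder is bookkeeping on the duality supplied by Theorem 2.3.
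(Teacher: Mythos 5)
The paper itself gives no proof of this statement --- it is quoted, together with Theorem 2.3, from the reference \cite{AK} --- so there is no internal argument to compare against; judged on its own, your proof is correct and follows the standard (and surely intended) route. The two biconditionals of Theorem 2.3 dispose of everything except the upgrade from the approximate decomposition $(O.p^{\prime}.2)$ to the exact one $(OS.p^{\prime}.2)$, and your Banach--Alaoglu subnet argument handles that correctly: weak-* closedness of the dual cone keeps the limits $g_1, g_2$ positive, weak-* lower semicontinuity of the dual norm gives $\Vert g_1 \Vert^{p^{\prime}} + \Vert g_2 \Vert^{p^{\prime}} \le \Vert f \Vert^{p^{\prime}}$, and applying $(O.p^{\prime}.1)$ to $-g_2 \le f \le g_1$ gives the reverse inequality, with the obvious $\max$ modifications when $p^{\prime} = \infty$. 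The only point left tacit is the routine check that $V^{\prime +}$ is proper, generating and norm-closed, so that $V^{\prime}$ qualifies as an ordered normed space in the sense of Definition 2.2: generation is immediate from the decomposition just produced, properness uses that $V^+$ generates $V$, and closedness follows since $V^{\prime +}$ is even weak-* closed.
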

It is easy to note that (real) $\ell _p$-spaces are order smooth $p$-normed spaces. In this light the following result is straight forward.
\begin{proposition}
Let $U$ be a $p$-orthogonal set in a normed linear space $X$ in which $\perp _p$ is additive and 
let $\langle U \rangle$ denote the linear span of $U$. Set $\langle U \rangle ^+ = \{ 
\sum_{k=1}^n \alpha _k x_k : x_k \in U, \alpha \ge 0; k \in \mathbb{N} \}$. Then $\left( 
\langle U \rangle , \langle U \rangle ^+ , \Vert \Vert \right)$ is an order smooth $p$-normed 
space.
\end{proposition}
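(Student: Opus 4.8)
The plan is to reduce everything to the coordinate model already identified just before Theorem 2.1, namely the isometric isomorphism $\langle U \rangle \cong c_{00}(U)_p$. Writing each $v \in \langle U \rangle$ in its unique representation $v = \sum_{x \in U} c_x(v)\, x$ (unique because $U$ is linearly independent, and finitely supported because $v$ is a finite combination), the observation preceding Theorem 2.1 gives $\Vert v \Vert^p = \sum_x |c_x(v)|^p \Vert x\Vert^p$ for $1 \le p < \infty$ and $\Vert v \Vert = \max_x |c_x(v)| \Vert x\Vert$ for $p = \infty$. I would first record that, under this identification, $\langle U \rangle^+$ is exactly the pointwise positive cone $\{ v : c_x(v) \ge 0 \text{ for all } x\}$: an element $\sum \alpha_k x_k$ with $\alpha_k \ge 0$ has nonnegative coordinates after collecting repeated generators, and conversely any $v$ with all coordinates nonnegative is visibly such a combination. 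Here additivity of $\perp_p$ is what licenses the norm formula for sums of more than two terms (via the symmetry of $\perp_p$ and its invariance under scaling the arguments), so this is the point where the hypothesis is genuinely used.

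With the coordinate picture in hand, properness and generation are immediate from uniqueness of coordinates: $v = v^+ - v^-$ splits $v$ into its pointwise positive and negative parts, both in $\langle U \rangle^+$, giving generation; and if $v$ and $-v$ both have nonnegative coordinates then every coordinate vanishes, giving properness. For closedness of the cone I would note that each coordinate functional $c_x$ is norm-continuous --- indeed $|c_x(v)| \Vert x \Vert = \Vert c_x(v)\, x\Vert \le \Vert v\Vert$ directly from the norm formula, and $\Vert x\Vert>0$ since $0\notin U$ --- so that $\langle U \rangle^+ = \bigcap_{x \in U}\{ v : c_x(v) \ge 0\}$ is an intersection of closed half-spaces, hence closed.

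It then remains to verify conditions $O.p.1$ and $O.p.2$ together with their $p=\infty$ analogues, and both reduce to scalar inequalities applied coordinatewise. For $O.p.1$, if $u \le v \le w$ then $c_x(u) \le c_x(v) \le c_x(w)$ for every $x$, whence $|c_x(v)| \le \max\{|c_x(u)|, |c_x(w)|\} \le (|c_x(u)|^p + |c_x(w)|^p)^{1/p}$; raising to the $p$-th power, multiplying by $\Vert x\Vert^p$ and summing over $x$ yields $\Vert v\Vert^p \le \Vert u\Vert^p + \Vert w\Vert^p$, the $p=\infty$ case being the same estimate with $\max$ in place of the sum. For $O.p.2$ I would simply take $u_1 = v^+$ and $u_2 = v^-$, which lie in $\langle U\rangle^+$ and satisfy $\Vert v^+\Vert^p + \Vert v^-\Vert^p = \sum_x |c_x(v)|^p \Vert x\Vert^p = \Vert v\Vert^p$ (respectively $\max\{\Vert v^+\Vert, \Vert v^-\Vert\} = \Vert v\Vert$ when $p=\infty$); thus the decomposition is in fact exact and the $\epsilon$ is not even needed.

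I do not anticipate a serious obstacle: the content lives entirely in the coordinate formula inherited from the observation before Theorem 2.1, and the only place demanding care is confirming that $\langle U\rangle^+$ coincides with the pointwise positive cone, so that $u \le v$ translates into coordinatewise inequalities, and that this cone is closed. Alternatively, one could bypass the explicit verifications by invoking that $\ell_p(U)$ (or $c_0(U)$ when $p=\infty$) is an order smooth $p$-normed space and observing that both $O.p.1$ and $O.p.2$ pass to the subspace $c_{00}(U)_p$ with its restricted norm and order --- $O.p.1$ because the defining triple already lies in the subspace, and $O.p.2$ because the positive and negative parts of a finitely supported element are again finitely supported.
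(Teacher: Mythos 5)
Your proof is correct and follows exactly the route the paper intends: the paper gives no explicit proof, merely remarking that the result is ``straight forward'' from the identification of $\langle U\rangle$ with $c_{00}(U)_p$ (the observation before Theorem 2.1) and the fact that $\ell_p$-spaces are order smooth $p$-normed spaces. Your write-up is a complete and accurate realization of that argument, including the two points that genuinely need checking --- that $\langle U\rangle^+$ is the coordinatewise positive cone and that it is norm-closed.
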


We can also give the following order theoretic characterization for $\ell_p$-spaces.

\begin{theorem}
 Let $1 \le p < \infty$ and let $V$ be a (norm) complete order smooth $p$-normed space. If 
 $\perp_p$ is additive on $V^+$ and $U$ is a total orthonormal set in $V^+$, then $V$ is 
 isometrically order isomorphic to $\ell_p(U)$.
\end{theorem}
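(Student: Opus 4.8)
The plan is to construct the isometry first on the dense subspace $\langle U\rangle$ and then upgrade it to an order isomorphism using the dual form of the order smooth hypothesis. The enabling preliminary observation is that $\perp_p$ is \emph{symmetric}: if $x\perp_p y$, then replacing the scalar $k$ by $1/k$ and homogenizing the defining identity shows $y\perp_p x$. Consequently, since $U\subseteq V^+$, additivity of $\perp_p$ on $V^+$ may be used in either argument as long as all elements involved lie in $V^+$; together with the scaling invariance $x\perp_p y\Rightarrow x\perp_p(\lambda y)$ for $\lambda\ge 0$, this is the only structural input needed for the isometry.

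First I would establish the exact $\ell_p$ norm identity on $\langle U\rangle$. For a positive combination $\sum_{i\in A}\alpha_i x_i$ with $\alpha_i\ge 0$, an induction on $|A|$ using additivity on $V^+$ (each $\alpha_i x_i\in V^+$) gives $\Vert\sum_{i\in A}\alpha_i x_i\Vert^p=\sum_{i\in A}\alpha_i^p$. For a general $v=\sum_k\alpha_k x_k$ write $v=v_+-v_-$ with $v_+=\sum_{\alpha_k>0}\alpha_k x_k$ and $v_-=\sum_{\alpha_k<0}|\alpha_k|x_k$, both in $V^+$. Since the supports of $v_+$ and $v_-$ are disjoint, symmetry and additivity on $V^+$ yield $v_+\perp_p v_-$; taking $k=-1$ in the orthogonality identity then gives $\Vert v\Vert^p=\Vert v_+\Vert^p+\Vert v_-\Vert^p=\sum_k|\alpha_k|^p$. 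Hence $\sum_k\alpha_k x_k\mapsto(\alpha_k)_k$ is a linear isometry of $\langle U\rangle$ onto $c_{00}(U)_p$, and because $U$ is total and $V$ is complete it extends to an isometric isomorphism $\Phi\colon V\to\ell_p(U)$ with $\Phi(x_k)=e_k$.

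It remains to show $\Phi$ is an order isomorphism, i.e. $\Phi(V^+)=\ell_p(U)^+$. As $\langle U\rangle^+\subseteq V^+$ maps onto the nonnegative finitely supported sequences, passing to closures gives $\ell_p(U)^+\subseteq\Phi(V^+)$. For the reverse inclusion, let $f_k:=\pi_k\circ\Phi\in V^{\prime}$ be the pullback of the $k$-th coordinate functional, so $\Vert f_k\Vert=1$, $f_k(x_j)=\delta_{kj}$, and $\Phi(v)=(f_k(v))_k$; it thus suffices to prove each $f_k$ is positive. This is the step I expect to be the main obstacle, and it is exactly where the order smooth hypothesis is used (the isometry above did not need it). Since $V$ is order smooth $p$-normed, Theorem 2.5 gives that $V^{\prime}$ satisfies the \emph{exact} decomposition condition (OS.$p^{\prime}$.2): write $f_k=g_1-g_2$ with $g_1,g_2\in V^{\prime +}$ and $\Vert g_1\Vert^{p^{\prime}}+\Vert g_2\Vert^{p^{\prime}}=\Vert f_k\Vert^{p^{\prime}}=1$. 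As $g_1,g_2\ge 0$ and $x_k\in V^+$ with $\Vert x_k\Vert=1$, we have $g_1(x_k)\le\Vert g_1\Vert\le 1$, while $g_1(x_k)=1+g_2(x_k)\ge 1$; hence $g_1(x_k)=1$, forcing $\Vert g_1\Vert=1$ and therefore $\Vert g_2\Vert=0$. Thus $g_2=0$ and $f_k=g_1\ge 0$.

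With every $f_k$ positive, any $v\in V^+$ has $\Phi(v)=(f_k(v))_k$ with all coordinates nonnegative, so $\Phi(V^+)\subseteq\ell_p(U)^+$; combined with the reverse inclusion this gives $\Phi(V^+)=\ell_p(U)^+$, and $\Phi$ is the desired isometric order isomorphism. The argument above is complete for $1<p<\infty$, where $p^{\prime}$ is finite and the power equation $\Vert g_1\Vert^{p^{\prime}}+\Vert g_2\Vert^{p^{\prime}}=1$ forces $g_2=0$. In the boundary case $p=1$ (so $p^{\prime}=\infty$) one argues the same way but with the $p=1$ form of (OS.$p^{\prime}$.2) indicated in Theorem 2.5, again concluding that the dual decomposition of $f_k$ has trivial negative part and hence $f_k\ge 0$.
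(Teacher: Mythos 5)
Your overall strategy---build the isometry $\Phi$ onto $\ell_p(U)$ first and then prove positivity of the coordinate functionals $f_k=\pi_k\circ\Phi$ by decomposing them in $V^{\prime}$---is genuinely different from the paper's. The paper works entirely in $V$: its key step is the lemma that if $u_1,u_2\in V^+$ with $u_1\perp_p u_2$ and $u_1-u_2\in V^+$, then $u_2=0$, which follows in one line from (O.$p$.1) since $0\le u_1-u_2\le u_1$ gives $\Vert u_1\Vert^p+\Vert u_2\Vert^p=\Vert u_1-u_2\Vert^p\le\Vert u_1\Vert^p$; applying this to the positive and negative parts of the coordinate expansion of $w\in V^+$ kills the negative part. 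Your dual argument (which is in substance the paper's own proof of Proposition 3.2, showing that a norm-one functional peaking at a positive element is itself positive) is correct and complete for $1<p<\infty$: there $p^{\prime}<\infty$, so $\Vert g_1\Vert^{p^{\prime}}+\Vert g_2\Vert^{p^{\prime}}=1$ together with $\Vert g_1\Vert\ge 1$ really does force $g_2=0$. The isometry part of your argument is also fine, and your care in invoking additivity and symmetry only for elements of $V^+$ is exactly what the weakened hypothesis requires.

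The case $p=1$, however, is a genuine gap and cannot be waved through by saying one ``argues the same way.'' For $p=1$ the condition (OS.$\infty$.2) reads $\max(\Vert g_1\Vert,\Vert g_2\Vert)=\Vert f_k\Vert=1$, and the chain $1\le g_1(x_k)\le\Vert g_1\Vert\le 1$ only yields $\Vert g_1\Vert=1$ and $g_2(x_k)=0$; it places no constraint whatever on $\Vert g_2\Vert$. Indeed the conclusion you assert (``the dual decomposition of $f_k$ has trivial negative part'') is simply false: in $V=\ell_1(\{1,2\})$ the coordinate functional $\pi_1=(1,0)$ admits the (OS.$\infty$.2)-decomposition $g_1=(1,1)$, $g_2=(0,1)$ with $g_2\ne 0$, even though $\pi_1$ is positive. (This is the same asymmetry visible in the paper's Proposition 3.2, whose proof also has to treat $p=1$ separately and there only concludes $f_1\in Supp_+(u)$, not $f=f_1$.) So for $p=1$ the positivity of $f_k$ cannot be extracted from the dual decomposition alone; you need a primal argument such as the paper's lemma above, which works uniformly for all $1\le p<\infty$: if some coordinate of $\Phi(w)$ with $w\in V^+$ were negative, the positive and negative parts $w_1,w_2$ of $w$ would satisfy $w_1\perp_p w_2$, $w=w_1-w_2\in V^+$ and $w_2\ne 0$, a contradiction.
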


\begin{proof}
The main stay of the proof is to show that
$$\langle U \rangle^+ := \langle U \rangle \cap V^+ = \{ \sum_{u \in U_1} \alpha_u u : \alpha_u 
\ge 0 \hskip 2mm \textrm{for all} \hskip 2mm U_1; U_1 \hskip2mm \textrm{a finite subset of} 
\hskip 2mm U \} .$$ 
We prove this using the following two lemma.
\begin{lemma}
Let $u_1, u_2 \in V^+$ with $u_1 \perp_p u_2$. If $u_1 - u_2 \in V^+$, then $u_2 = 0$.
\end{lemma}
\begin{proof}
Put $u_1 - u_2 = u$. then $0 \le u \le u_1$ so that 
$$\Vert u_1 \Vert^p \ge \Vert u \Vert^p = \Vert u_1 - u_2 \Vert^p = \Vert u_1 \Vert^p + \Vert 
u_2 \Vert^p .$$
Now, it follows that $u_2 = 0$.
\end{proof}
\begin{lemma}
 Let $u_1, \dots , u_n \in U$ be distinct. Then $\sum_{i=1}^n \alpha_i x_i \in V^+$ if and only 
 if $\alpha_i \ge 0$ for all $i= 1, \dots , n$.
\end{lemma}
\begin{proof}
If $\alpha_i \ge 0$ for all $i= 1, \dots , n$, then $\sum_{i=1}^n \alpha_i x_i \in V^+$. Conversely, let $\sum_{i=1}^n \alpha_i x_i \in V^+$.
If $\alpha_i <0$ for some $i$, then the set 
$$I_1 = \{ i : 1 \le i \le n \hskip 2mm \textrm{and} \hskip 2mm \alpha_i < 0 \} \not= \emptyset .$$
Now, $\sum_{i=1}^n \alpha_i x_i = \sum_{i \not\in I_1} \alpha_i x_i - \sum_{i \in I_1} (-\alpha_i) x_i \in V^+$ with 
$\sum_{i \in I_1} (-\alpha_i) x_i$ and $\sum_{i \not\in I_1} \alpha_i x_i$ in $V^+$ so that by 
Lemma 2.7, $\sum_{i \in I_1} (-\alpha_i) x_i = 0$. Thus
$$ 0 = \Vert \sum_{i \in I_1} (-\alpha_i) x_i \Vert^p = \sum_{i \in I_1} (-\alpha_i)^p.$$
This leads to a contradiction: $\alpha_i = 0$ for all $i \in I_1$. Thus the result holds.
\end{proof}
\noindent
{\it Proof of Theorem 2.6:} By Lemma 2.8, 
$$\langle U \rangle^+ = \{ \sum_{u \in U_1} \alpha_u u : \alpha_u \ge 0 \hskip 2mm 
\textrm{for all} \hskip 2mm U_1; U_1 \hskip2mm \textrm{a finite subset of} \hskip 2mm U \} .$$ 
Now it routine to show that $\langle U \rangle$ is isometrically order isomorphic to $c_{00}(U)_p$ 
in a natural way. Also $\overline{\langle U \rangle} = V$ is isometrically isomorphic to 
$\ell_p(U)$. Thus it only remains to show that $V^+ = \overline{\langle U \rangle^+}$.

Let $w \in V^+$. Then there is $(\alpha_u)_{u \in U} \in \ell_p(U)$ such that $w = \sum_{u \in U} 
\alpha_uu$. Further we may find a countable set $\{ u_n\}$ in $U$ such that $\alpha_u = 0$ if 
$u \not\in \{ u_n: n \in \mathbb{N} \}$. Let us write $\alpha_{u_n} = \alpha_n$ so that 
$w = \sum_{n\in \mathbb{N}}\alpha_n u_n$. Next, put
$$I_1 = \{ n \in \mathbb{N}: \alpha_n > 0 \} \hskip 2mm \textrm{and} \hskip 2mm I_2 = \{ n \in 
\mathbb{N}: \alpha_n < 0 \} .$$
Since $w \in V^+$, $I_1 \not= \emptyset$. Further, if $I_2 = \emptyset$, then $w \in 
\overline{\langle U \rangle^+}$ and we are done. Thus 
assume to the contrary that $I_2 \not= \emptyset$. Arrange $I_1$ and $I_2$ as increasing 
sequences $\{ n_k \}$ and $\{ m_k\}$ respectively. Note that these sequences may be finite too. 
Put $w_{1,t} = \sum_{k=1}^t \alpha_{n_k} u_{n_k}$ and $w_{2,t} = \sum_{k=1}^t (-\alpha_{m_k}) 
 u_{m_k}$. Further put $w_1 = \sum_{k=1}^{\infty} \alpha_{n_k} u_{n_k}$ and $w_2 = 
\sum_{k=1}^{\infty} (-\alpha_{m_k}) u_{m_k}$. Then $w_{1,t} \to w_1$ and $w_{2,t} \to w_2$ as 
$t \to \infty$. Also $w_1, w_2 \in \overline{\langle U \rangle^+}$ with $w = w_1 - w_2$. We show 
that $w_1 \perp_p w_2$. For this let $\lambda \in \mathbb{R}$. Then
\begin{align*}
\Vert w_1 + \lambda w_2 \Vert^p &= \lim_{t \to \infty} \Vert w_{1,t} + \lambda w_{2,t} \Vert^p\\
&= \lim_{t \to \infty} \left( \Vert w_{1,t} \Vert^p + \Vert \lambda w_{2,t} \Vert^p \right)\\
&= \Vert w_1 \Vert + \Vert \lambda w_2 \Vert^p.
\end{align*}
It follows that $w_1 \perp_p w_2$. Now by Lemma 2.7, $w_2 = 0$ which leads to a contradiction that $I_2 = \emptyset$. This completes the proof.
\end{proof}
\begin{remark}
 If we can prove a counterpart of Lemma 2.7 for $p = \infty$, then Lemma 2.8 and consequently Theorem 2.6 also hold for $p = \infty$ where
$\ell_p$ is replaced by $c_0$. This result we shall obtain as a corollary of another result where we characterize $\infty$-orthogonality
in order smooth $\infty$-normed spaces.
\end{remark}

\section{Orthogonality in order smooth $\infty$-normed spaces}

\begin{definition}
 Let $V$ be an order smooth $p$-normed space, $1 \le p \le \infty$. For $v \in V \setminus \{ 0 \}$ we say that $f \in V^{\prime}$ supports
$v$ if $\Vert f \vert = 1$ and $\Vert v \Vert = f(v)$. The set of all supports of $v$ will be denoted by $Supp(v)$. For $u \in V^+ \setminus
\{ 0 \}$, we write, $Supp_+(u)$ for $Supp(u) \cap V^{\prime +}$.
\end{definition}
By Hahn-Banach theorem, $Supp(v) \not= \emptyset$, if $v \in V \setminus \{ 0 \}$. Moreover, it is weak*-compact and convex too.

\begin{proposition}
  Let $V$ be an order smooth $p$-normed space, $1 \le p \le \infty$. For $u \in V^+ \setminus \{ 0 \}$, $Supp_+(u) \not= \emptyset$.
\end{proposition}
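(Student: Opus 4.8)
The plan is to start from any norming functional supplied by Hahn-Banach and then use the dual decomposition of Theorem 2.5 to extract a \emph{positive} functional that still norms $u$. Concretely, fix $u \in V^+ \setminus \{0\}$. As recorded just after Definition 3.1, $Supp(u) \neq \emptyset$ by Hahn-Banach, so I would choose $f \in Supp(u)$, meaning $\|f\| = 1$ and $f(u) = \|u\|$.

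By Theorem 2.5 the dual $V^{\prime}$ satisfies the exact decomposition property $(OS.p'.2)$: there exist $g_1, g_2 \in V^{\prime +}$ with $f = g_1 - g_2$ and $\|g_1\|^{p'} + \|g_2\|^{p'} = \|f\|^{p'} = 1$ (with the obvious $\max$-version when $p' = \infty$, i.e. $p = 1$). In either case this forces $\|g_1\| \le 1$. My candidate for a positive support of $u$ is $g_1$, and the remaining work is to verify $g_1 \in Supp_+(u)$.

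To do so I would squeeze $g_1(u)$ from both sides. On one hand, since $u \in V^+$ and $g_2 \in V^{\prime +}$ we have $g_2(u) \ge 0$, whence $g_1(u) = f(u) + g_2(u) = \|u\| + g_2(u) \ge \|u\|$. On the other hand $g_1(u) \le \|g_1\|\,\|u\| \le \|u\|$. Combining these gives $g_1(u) = \|u\|$ (and incidentally $g_2(u) = 0$). Since $\|u\| > 0$, the relation $\|u\| = g_1(u) \le \|g_1\|\,\|u\|$ forces $\|g_1\| \ge 1$, and together with $\|g_1\| \le 1$ this yields $\|g_1\| = 1$. Thus $g_1 \in V^{\prime +}$ with $\|g_1\| = 1$ and $g_1(u) = \|u\|$, that is, $g_1 \in Supp_+(u)$, so the set is nonempty.

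The argument is uniform in $p$: the only feature of $(OS.p'.2)$ it uses is that the decomposition is exact enough to guarantee $\|g_1\| \le 1$, the precise $\ell_{p'}$-combination of the two norms being irrelevant beyond this bound. The one point deserving care — and the main conceptual obstacle — is that one must invoke the \emph{exact} dual decomposition of Theorem 2.5 rather than the approximate condition $(O.p.2)$ on $V$ itself. An $\epsilon$-decomposition of $f$ would only give $\|g_1\| \le 1 + O(\epsilon)$ and $g_1(u) \ge \|u\| - O(\epsilon)$, so the squeeze would degrade to $g_1(u) \approx \|u\|$, and one would then have to pass to a weak*-cluster point of the approximate positive functionals (using weak*-compactness of the dual unit ball together with the weak*-closedness of $V^{\prime +}$) to recover an exact support. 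Routing the proof through Theorem 2.5 sidesteps this limiting argument entirely.
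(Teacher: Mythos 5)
Your proof is correct and follows essentially the same route as the paper's: pick $f \in Supp(u)$ via Hahn--Banach, decompose it exactly in $V^{\prime}$ using $(OS.p^{\prime}.2)$ from Theorem 2.5, and squeeze to show the positive part already norms $u$. The only cosmetic difference is that you treat all $p$ uniformly (needing only $g_2(u)=0$), while the paper splits into the cases $1 < p \le \infty$ (where it additionally concludes $f_2 = 0$) and $p = 1$.
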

\begin{proof}
 First let $1 < p \le \infty$ (so that $1 \le p^{\prime} < \infty$). Find $f \in Supp(u)$. Since $V^{\prime}$ satisfies $(OS.p^{\prime}.2)$,
we can find $f_1, f_2 \in V^{\prime +}$ such that $f = f_1 - f_2$ and $1 = \Vert f 
\Vert^{p^{\prime}} = \Vert f_1 \Vert^{p^{\prime}} + \Vert f_2 \Vert^{p^{\prime}}$. Thus
$$\Vert u \Vert = f(u) = f_1(u) - f_2(u) \le f_1(u) \le \Vert f_1 \Vert \Vert u \Vert .$$
As $u \not= 0$, we get that $\Vert f_1 \Vert \ge 1$ so that $\Vert f_1 \Vert = 1$ and $f_2 = 0$. Thus $f_1 = f \in Supp_+(u)$.

Next let $p = 1$. Find $f \in Supp(u)$. Since $V^{\prime}$ satisfies $(OS.{\infty}.2)$, we can 
find $f_1, f_2 \in V^{\prime +}$ such that $f = f_1 - f_2$ and $1 = \Vert f \Vert = \max 
(\Vert f_1 \Vert, \Vert f_2 \Vert )$. As above we have
$$\Vert u \Vert = f(u) = f_1(u) - f_2(u) \le f_1(u) \le \Vert f_1 \Vert \Vert u \Vert$$
so that $\Vert f_1 \Vert = 1$ and $f_2(u) = 0$. Thus $\Vert u \Vert = f(u) = f_1(u)$ and consequently, $f_1 \in Supp_+(u)$.
\end{proof}
In the next result we characterize $\infty$-orthogonality in order smooth $\infty$-normed spaces.
Now onwards, in this section $V$ denotes an order smooth $\infty$-normed space unless otherwise 
stated.
\begin{theorem}
 Let $V$ be an order smooth $\infty$-normed space. Suppose that $u_1, u_2 \in V^+ \setminus \{ 0 \}$ and let $W$ be the linear span of
$u_1, u_2$. Then the following statements are equivalent:
\begin{enumerate}
 \item $\Vert \Vert u_1 \Vert^{-1} u_1 + \Vert u_2 \Vert^{-1} u_2 \Vert = 1$;
 \item $u_1 \perp_{\infty} u_2$;
 \item For $f_i \in Supp_+(u_i)$, $i = 1, 2$, we have $g_1 \perp_1 g_2$ with $g_i(u_j) = 0$ if $i \not= j$ where $g_i = f_i\vert_W$, 
$i = 1, 2$.
\end{enumerate}
\end{theorem}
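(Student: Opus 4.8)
The plan is to establish the cycle $(2) \Rightarrow (1) \Rightarrow (3) \Rightarrow (2)$, after first reducing to the normalized vectors $e_i = \Vert u_i \Vert^{-1} u_i$. Since $\perp_\infty$ is invariant under multiplication by positive scalars (because $\Vert a u_1 + k b u_2 \Vert = a \Vert u_1 + (kb/a) u_2 \Vert$), statement $(2)$ is equivalent to $e_1 \perp_\infty e_2$, and $\mathrm{Supp}_+(u_i) = \mathrm{Supp}_+(e_i)$. The implication $(2) \Rightarrow (1)$ is then immediate: applying the defining identity of $\perp_\infty$ to $u_1, u_2$ with $k = \Vert u_1 \Vert \, \Vert u_2 \Vert^{-1}$ and factoring out $\Vert u_1 \Vert^{-1}$ gives $\Vert e_1 + e_2 \Vert = 1$.

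For $(1) \Rightarrow (3)$, I would first invoke Proposition 3.3 to choose $f_i \in \mathrm{Supp}_+(u_i) = \mathrm{Supp}_+(e_i)$. The crucial observation is that $(1)$ forces $f_1(e_2) = 0$ and $f_2(e_1) = 0$: since $f_1 \ge 0$ and $e_2 \ge 0$, one has $1 = \Vert e_1 + e_2 \Vert \ge f_1(e_1 + e_2) = 1 + f_1(e_2) \ge 1$, whence $f_1(e_2) = 0$, and symmetrically for $f_2$. Rescaling gives $g_i(u_j) = 0$ for $i \ne j$, the second half of $(3)$, while $g_i(e_i) = 1$ shows $\{g_1, g_2\}$ is dual to $\{e_1, e_2\}$ and $\Vert g_i \Vert_{W'} = 1$. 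To obtain $g_1 \perp_1 g_2$ I would estimate $\Vert g_1 + t g_2 \Vert_{W'}$: the triangle inequality gives $\le 1 + |t|$, and testing against $w = e_1 + (\mathrm{sgn}\, t) e_2$ gives $\ge (1 + |t|)/\Vert w \Vert$, where $\Vert w \Vert = 1$ for $t \ge 0$ by $(1)$, while for $t < 0$ the bound $\Vert e_1 - e_2 \Vert \le 1$ follows from $-e_2 \le e_1 - e_2 \le e_1$ together with (O.$\infty$.1). Hence $\Vert g_1 + t g_2 \Vert_{W'} = 1 + |t|$ for every $t$, i.e. $g_1 \perp_1 g_2$.

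The heart of the argument is $(3) \Rightarrow (2)$, which I would carry out as a two-dimensional duality computation on $W$. With $f_i$ and $g_i = f_i|_W$ as above, the hypotheses $g_i(u_j) = 0\ (i \ne j)$ and $g_i(u_i) = \Vert u_i \Vert$ say precisely that $g_1(e_j) = \delta_{1j}$ and $g_2(e_j) = \delta_{2j}$, so $\{g_1, g_2\}$ is the coordinate basis of $W'$ dual to the basis $\{e_1, e_2\}$ of $W$ (note that $(3)$ also forces $e_1 \ne e_2$, so $W$ is genuinely two-dimensional). Combining $g_1 \perp_1 g_2$ with $\Vert g_i \Vert_{W'} = 1$ yields $\Vert \alpha g_1 + \beta g_2 \Vert_{W'} = |\alpha| + |\beta|$ for all scalars, i.e. $W'$ is isometrically $\ell_1^2$ in these coordinates. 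Since $\Vert w \Vert_W = \sup \{ g(w) : g \in W', \Vert g \Vert_{W'} \le 1 \}$ by Hahn--Banach, and the pairing of $g = \alpha g_1 + \beta g_2$ with $w = a e_1 + b e_2$ is $\alpha a + \beta b$, dualizing the $\ell_1$-norm gives $\Vert a e_1 + b e_2 \Vert_W = \max(|a|, |b|)$. In particular $\Vert e_1 + k e_2 \Vert = \max(1, |k|)$ for every $k$, which is exactly $e_1 \perp_\infty e_2$, hence $u_1 \perp_\infty u_2$.

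The main obstacle is precisely this last passage from the dual side back to $W$: one must recognize that the $1$-orthogonality of the restricted supports, together with their being a normalized dual basis, rigidifies the norm on the plane $W$ into the $\ell_\infty$-norm, after which $\infty$-orthogonality is automatic. Everything rests on the annihilation identities $f_1(e_2) = f_2(e_1) = 0$ and on the existence of positive supports furnished by Proposition 3.3; the order axiom (O.$\infty$.1) is needed only to supply the easy upper bound $\Vert e_1 - e_2 \Vert \le 1$ used in $(1) \Rightarrow (3)$, all remaining upper bounds coming for free from the duality in $(3) \Rightarrow (2)$.
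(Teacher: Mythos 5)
Your proof is correct, but it is organized differently from the paper's. The paper proves the cycle $(1)\Rightarrow(2)\Rightarrow(3)\Rightarrow(1)$, and its workhorse is the direct implication $(1)\Rightarrow(2)$: writing $u_1+\lambda u_2$ as a convex combination of $u_1+u_2$ and $u_1$ (resp.\ of $u_1+u_2$ and $u_2$) to get the upper bound $\max(1,\lambda)$ for $\lambda>0$, using (O.$\infty$.1) for $\lambda<0$, and using the annihilation identities $f_1(u_2)=f_2(u_1)=0$ for the lower bounds; the duality steps $(2)\Rightarrow(3)$ and $(3)\Rightarrow(1)$ are then short computations of $\Vert\alpha_1 g_1+\alpha_2 g_2\Vert$ and $\Vert u_1+u_2\Vert$ over the respective unit balls. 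You instead run $(2)\Rightarrow(1)\Rightarrow(3)\Rightarrow(2)$ and shift all the work into the two-dimensional bipolar argument: $(1)\Rightarrow(3)$ via the test vectors $e_1\pm e_2$ (with $\Vert e_1-e_2\Vert\le 1$ supplied by (O.$\infty$.1), exactly as in the paper), and then $(3)\Rightarrow(2)$ by identifying $W'$ isometrically with $\ell_1^2$ in the dual basis $\{g_1,g_2\}$ and recovering the full $\ell_\infty$ norm on $W$ by Hahn--Banach, which gives $\Vert ae_1+be_2\Vert=\max(|a|,|b|)$ for \emph{all} $a,b$ at once. This buys you two things: you avoid the paper's case analysis $\lambda\le 1$ versus $\lambda>1$ entirely, and your $(3)\Rightarrow(2)$ is strictly stronger than the paper's $(3)\Rightarrow(1)$ (the paper only evaluates the dual formula at the single vector $u_1+u_2$ and then must route back through $(1)\Rightarrow(2)$). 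The ingredients are the same in both proofs --- positive supports from Proposition 3.2 (which you cite as 3.3), the annihilation identities, and the $\ell_1$/$\ell_\infty$ duality on the span --- so this is a reorganization rather than a new idea, but it is a cleaner one. One cosmetic point: since statement $(3)$ is universally quantified over $f_i\in Supp_+(u_i)$, your $(1)\Rightarrow(3)$ should (and in fact does) work for an arbitrary such choice, while $(3)\Rightarrow(2)$ needs Proposition 3.2 to guarantee at least one choice exists; it is worth saying this explicitly.
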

\begin{proof}
Without any loss of generality, we may assume that $\Vert u_i \Vert = 1$, $i = 1, 2$. First, let $\Vert u_1 + u_2 \Vert = 1$. Let $\lambda
> 0$.

Case.1: $\lambda \le 1$.

In this case,
$$\Vert u_1 + \lambda u_2 \Vert = \Vert \lambda (u_1 + u_2) + (1 - \lambda ) u_1 \Vert \le \lambda \Vert u_1 + u_2 \Vert + (1- \lambda)
\Vert u_1 \Vert = 1.$$
Case.2: $\lambda > 1$.

In this case,
$$\Vert u_1 + \lambda u_2 \Vert = \Vert (u_1 + u_2) + (\lambda - 1 ) u_2 \Vert \le \Vert u_1 + 
u_2 \Vert + (\lambda - 1) \Vert u_2 \Vert = \lambda .$$
Thus, in either case, $\Vert u_1 + \lambda u_2 \Vert \le \max (1, \lambda )$. Further, as 
$\lambda > 0$ we also have
$$\max (1, \lambda ) = \max (\Vert u_1 \Vert , \Vert \lambda u_2 \Vert ) \le \Vert u_1 + 
\lambda u_2 \Vert .$$
Thus for $\lambda \ge 0$, we get $\Vert u_1 + \lambda u_2 \Vert = \max ( \Vert u_1 \Vert , 
\Vert \lambda u_2 \Vert )$.

Again, for $\lambda > 0$, $- \lambda u_2 \le u_1 - \lambda u_2 \le u_1$ so that 
$$\Vert u_1 - \lambda u_2 \Vert \le \max ( \Vert u_1 \Vert , \Vert \lambda u_2 \Vert ) .$$
Next, let $f_i \in Supp_+(u_i)$, $i = 1, 2$. Then $f_i(u_i) = 1 = \Vert f_i \Vert$, $i = 1, 2$. 
As $\Vert u_1 + u_2 \Vert = 1$ we get
$$1 = \Vert u_1 + u_2 \Vert \ge f_1 ( u_1 + u_2 ) = 1 + f_1 (u_2) \ge 1 .$$
Thus $f_1 (u_2) = 0$. Dually, $f_2(u_1) = 0$. Now it follows that
$$\Vert u_1 - \lambda u_2 \Vert \ge \vert f_1 (u_1 - \lambda u_2 ) = 1$$
and
$$\Vert u_1 - \lambda u_2 \Vert \ge \vert f_2 (u_1 - \lambda u_2 ) = \lambda$$
so that
$$\Vert u_1 - \lambda u_2 \Vert \ge \max (1, \lambda ) = \max (\Vert u_1 \Vert , \Vert \lambda u_2 \Vert ) .$$
Therefore, $u_1 \perp_{\infty} u_2$.

Next, let $u_1 \perp_{\infty} u_2$. Let $f_i \in Supp_+(u_i)$, $i = 1, 2$ and put $g_i = f_i\vert_W$, $i = 1, 2$. Then as above, 
we have $g_i(u_j) = 0$ if $i \not= j$. For $\alpha_1, \alpha_2 \in \mathbb{R}$, we have
\begin{align*}
\Vert \alpha_1 g_1 + \alpha_2 g_2 \Vert &= \sup \{ \vert ( \alpha_1 g_1 + \alpha_2 g_2 )
(\lambda_1 u_1 + \lambda_2 u_2 )\vert : \Vert \lambda_1 u_1 + \lambda_2 u_2 \Vert \le 1 \}\\
&= \sup \{ \vert \alpha_1 \lambda_1 + \alpha_2 \lambda_2 \vert : \max (\vert \lambda_1 \vert ,
\vert \lambda_2 \vert ) \le 1 \}\\
&= \vert \alpha_1 \vert + \vert \alpha_2 \vert .
\end{align*}
In particular, $\Vert g_i \Vert = 1$, $i = 1, 2$ so that $g_1 \perp_1 g_2$.

Finally assume that for $f_i \in Supp_+(u_i)$, $i = 1, 2$, we have $g_1 \perp_1 g_2$ with $g_i(u_j) = 0$ if $i \not= j$ where 
$g_i = f_i\vert_W$, $i = 1, 2$. It is easy to note that $\Vert g_i \Vert = 1$, $i = 1, 2$. Thus
\begin{align*}
\Vert u_1 + u_2 \Vert &= \sup \{ \vert ( \alpha_1 g_1 + \alpha_2 g_2 ) (u_1 + u_2 )\vert :
\Vert \alpha_1 g_1 + \alpha_2 g_2 \Vert \le 1 \}\\
&= \sup \{ \vert \alpha_1 + \alpha_2 \vert : \vert \alpha_1 \vert + \vert \alpha_2 \vert \le 1 \} 
= 1.
\end{align*}
This completes the proof.
\end{proof}
\begin{remark} \hfill

\begin{enumerate}
 \item In the above theorem, we have $f_1 \perp_1 f_2$. In fact, for $\alpha_1, \alpha_2 \in \mathbb{R}$
$$\vert \alpha_1 \vert + \vert \alpha_2 \vert \ge \Vert \alpha_1 f_1 + \alpha_2 f_2 \Vert \ge \Vert \alpha_1 g_1 + \alpha_2 g_2 \Vert \\
= \vert \alpha_1 \vert + \vert \alpha_2 \vert .$$
In particular, $\alpha_1 f_1 + \alpha_2 f_2$ is a norm preserving extension of $\alpha_1 g_1 + \alpha_2 g_2$ for all $\alpha_1, \alpha_2 \in 
\mathbb{R}$.
\item Let $u_1, u_2 \in V^+\setminus \{ 0 \}$. Since $- u_2 \le u_1 - u_2 \le u_1$ and $u_i \le u_1 + u_2$ for $i = 1, 2$, we get that
$\Vert u_1 - u_2 \Vert \le \max ( \Vert u_1 \Vert , \Vert u_2 \Vert ) \le \Vert u_1 + u_2 \Vert$. Thus by the above theorem, we may conclude 
that $u_1 \perp_{\infty} u_2$ if and only if $\Vert \Vert u_1 \Vert^{-1} u_1 + \Vert u_2 \Vert^{-1} u_2 \Vert = \Vert \Vert u_1 \Vert^{-1} 
u_1 - \Vert u_2 \Vert^{-1} u_2 \Vert$.
\end{enumerate}

\end{remark}
Now we can present Lemma 2.7 for $p = \infty$.
\begin{corollary}
 Let $V$ be an order smooth $\infty$-normed space and let $u_1, u_2 \in V^+$ with $u_1 
 \perp_{\infty} u_2$. If $u_1 - u_2 \in V^+$, then $u_2 = 0$.
\end{corollary}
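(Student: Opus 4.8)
The plan is to observe first that the elementary argument behind Lemma 2.7 does \emph{not} survive the passage to $p = \infty$, which is exactly why this is billed as a corollary of Theorem 3.3. Writing $u = u_1 - u_2$, we still have $0 \le u \le u_1$, so (O.$\infty$.1) gives $\Vert u \Vert \le \max \{ \Vert 0 \Vert , \Vert u_1 \Vert \} = \Vert u_1 \Vert$; and taking $k = -1$ in the definition of $\perp_\infty$ gives $\Vert u_1 - u_2 \Vert = \max \{ \Vert u_1 \Vert , \Vert u_2 \Vert \}$. Combining these yields only $\Vert u_2 \Vert \le \Vert u_1 \Vert$, which is far too weak to force $u_2 = 0$. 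At $p = \infty$ the norm alone does not see the orthogonality sharply enough, so I would instead route the proof through the dual/support data provided by Proposition 3.2 and Theorem 3.3.

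Accordingly, I would first dispose of the degenerate cases. If $u_1 = 0$, then $u_1 - u_2 = -u_2 \in V^+$ together with $u_2 \in V^+$ and the properness of $V^+$ force $u_2 = 0$; and if $u_2 = 0$ there is nothing to prove. So assume $u_1, u_2 \in V^+ \setminus \{ 0 \}$, which is precisely the hypothesis of Theorem 3.3. Choose $f_2 \in Supp_+(u_2)$, nonempty by Proposition 3.2. Since $u_1 \perp_\infty u_2$ is statement (2) of Theorem 3.3, the equivalent statement (3) holds; in particular $g_2(u_1) = 0$, where $g_2 = f_2 \vert_W$ and $W$ is the span of $u_1, u_2$. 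As $u_1 \in W$, this reads $f_2(u_1) = 0$.

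It then remains to extract a contradiction from positivity. Since $f_2 \in V^{\prime +}$ and $u_1 - u_2 \in V^+$, we have $f_2(u_1) - f_2(u_2) = f_2(u_1 - u_2) \ge 0$, so that $f_2(u_1) \ge f_2(u_2) = \Vert u_2 \Vert > 0$, contradicting $f_2(u_1) = 0$; hence $u_2 = 0$. The one genuine step is the vanishing $f_2(u_1) = 0$, and this is exactly where Theorem 3.3 does the real work: everything else is bookkeeping with the positivity of $f_2$ and the support identity $f_2(u_2) = \Vert u_2 \Vert$. The main obstacle is therefore conceptual rather than computational, namely the recognition that the $\infty$-case must be fed through the implication (2)$\Rightarrow$(3) instead of the direct norm estimate; once that implication is invoked, I expect no further difficulty.
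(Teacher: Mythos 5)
Your proof is correct and follows essentially the same route as the paper: pick $f_2 \in Supp_+(u_2)$, use Theorem 3.3 to get $f_2(u_1)=0$, and derive the contradiction $0 \le f_2(u_1-u_2) = -\Vert u_2\Vert$. Your explicit handling of the degenerate case $u_1 = 0$ via properness of $V^+$ is a small refinement of the paper's ``Clearly, $u_1 \ne 0$,'' but the argument is otherwise identical.
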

\begin{proof}
 Assume to the contrary that $u_2 \not= 0$. Clearly, $u_1 \not= 0$. If $f \in Supp_+(u_2)$, then by Theorem 3.3, $f(u_1) = 0$ and $f(u_2) = 
\Vert u_2 \Vert$. Now, as $u_1 - u_2 \in V^+$ we have $0 \le f(u_1 - u_2) = - \vert u_2 \Vert$. But, then $u_2 = 0$ which contradicts the 
assumption that $u_2 \not= 0$. Thus the result holds.
\end{proof}
Now the counterparts of Lemma 2.8 and Theorem 2.6 for $p = \infty$ follow as a routine matter.
\begin{theorem}
 Let $V$ be a (norm) complete order smooth $\infty$-normed space in which $\perp_{\infty}$ is additive. If $V^+$ contains a total
$\infty$-orthonormal set $U$, then $V$ is isometrically order isomorphic to $c_0(U)$.
\end{theorem}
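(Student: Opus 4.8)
The plan is to follow the proof of Theorem~2.6 line for line, using Corollary~3.5 wherever that argument invoked Lemma~2.7; Remark~2.9 already isolates this as the only missing ingredient for the case $p = \infty$. First I would note that, since $U \subseteq V^+$ is $\infty$-orthonormal and $\perp_{\infty}$ is additive, the observation preceding Theorem~2.1 gives $\Vert \sum_{k=1}^{n} \alpha_k u_k \Vert = \max_{k} \vert \alpha_k \vert$ for distinct $u_1, \dots, u_n \in U$. Hence $u \mapsto e_u$ extends to a linear isometry of $\langle U \rangle$ onto $c_{00}(U)_{\infty}$, and since $V$ is complete and $U$ is total this isometry extends to an isometric isomorphism of $V = \overline{\langle U \rangle}$ onto the completion of $c_{00}(U)_{\infty}$ in the supremum norm, which is $c_0(U)$ (and \emph{not} $\ell_{\infty}(U)$ --- this is precisely why $c_0$ replaces $\ell_p$ in the statement). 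It then remains only to verify that this isometry carries $V^+$ onto the positive cone of $c_0(U)$, i.e.\ that $V^+ = \overline{\langle U \rangle^+}$, where $\langle U \rangle^+ = \langle U \rangle \cap V^+$.

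Next I would establish the $p = \infty$ form of Lemma~2.8: for distinct $u_1, \dots, u_n \in U$ one has $\sum_i \alpha_i u_i \in V^+$ if and only if $\alpha_i \ge 0$ for every $i$. One implication is clear. For the other, suppose $I_1 = \{ i : \alpha_i < 0 \}$ is nonempty and put $a = \sum_{i \notin I_1} \alpha_i u_i$ and $b = \sum_{i \in I_1} (-\alpha_i) u_i$, both lying in $V^+$. Since $\perp_{\infty}$ is symmetric and unaffected by positive rescaling of either entry, its additivity yields $a \perp_{\infty} b$; as $a - b = \sum_i \alpha_i u_i \in V^+$, Corollary~3.5 forces $b = 0$. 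But $\Vert b \Vert = \max_{i \in I_1} (-\alpha_i) > 0$, a contradiction. This identifies $\langle U \rangle^+$ with the finitely supported nonnegative combinations of $U$, whence $\overline{\langle U \rangle^+} \subseteq V^+$ because $V^+$ is closed.

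The reverse inclusion is where the single nontrivial step sits. Given $w \in V^+$, write $w = \sum_n \alpha_n u_n$ with $(\alpha_n) \in c_0$, and split the support into $I_1 = \{ n : \alpha_n > 0 \}$ and $I_2 = \{ n : \alpha_n < 0 \}$. If $I_2 = \emptyset$ then $w$ is a norm limit of finite nonnegative combinations and we are done, so suppose $I_2 \neq \emptyset$. Let $w_{1,t}$ and $w_{2,t}$ be the partial sums of the positive and the negated negative parts, and let $w_1, w_2 \in \overline{\langle U \rangle^+}$ be their norm limits, so $w = w_1 - w_2$. The key point is that $w_1 \perp_{\infty} w_2$: because $w_{1,t}$ and $w_{2,t}$ are disjointly supported within the orthonormal family, $\Vert w_{1,t} + \lambda w_{2,t} \Vert = \max ( \Vert w_{1,t} \Vert , \vert \lambda \vert \Vert w_{2,t} \Vert )$ for every real $\lambda$, and letting $t \to \infty$ gives $\Vert w_1 + \lambda w_2 \Vert = \max ( \Vert w_1 \Vert , \Vert \lambda w_2 \Vert )$ by continuity of the norm and of $\max$. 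Since $w = w_1 - w_2 \in V^+$, Corollary~3.5 again gives $w_2 = 0$, contradicting $I_2 \neq \emptyset$. Therefore $w \in \overline{\langle U \rangle^+}$, so $V^+ = \overline{\langle U \rangle^+}$ and the isometry is an order isomorphism.

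The main obstacle is thus already removed by Corollary~3.5; beyond it, the only genuine work is the passage from the finite orthonormality of $U$ to the $\infty$-orthogonality of the infinite parts $w_1$ and $w_2$, which the disjoint-support maximum identity together with a routine limiting argument handles cleanly.
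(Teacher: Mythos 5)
Your proposal is correct and is essentially the proof the paper intends: the paper gives no written proof of this theorem, stating only that it ``follows as a routine matter'' once Corollary~3.5 supplies the $p=\infty$ counterpart of Lemma~2.7, and Remark~2.9 outlines exactly the substitution you carry out. You have simply filled in the details of that plan --- the finite-combination norm identity, the $\infty$-version of Lemma~2.8, and the limiting argument for $w_1 \perp_{\infty} w_2$ --- all of which check out.
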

Now we consider some special cases.
\begin{definition}
Let $V$ be a (real) ordered space. An increasing net $\left\{e_\lambda\right\}$ in
$V^+$ is called an {\it approximate order unit} for $V$ if for each $v \in V$ there
is  $k >0$ such that $k e_{\lambda} \pm v \in V^+$ for some $\lambda$.
\end{definition}
In this case $\{ e_{\lambda} \}$  determines a seminorm $\Vert~\Vert_a$  on $V$ that satisfies 
$(O.{\infty}.1)$ and $(O.{\infty}.2)$. We call $\big( V, \{ e_{\lambda} \}\big)$ an {\it 
approximate order unit space}, if $\Vert~\Vert_a$ is a norm on $V$ in which $V^+$ is closed. Thus
an approximate order unit space is an order smooth $\infty$-normed space. It may be noted that
the self-adjoint part of a $C^{\ast}$-algebra is an approximate order unit space.

When $e_{\lambda} = e$ for all $\lambda$ we drop the term "approximate" in the above notions. For 
example, $(V, e)$ denotes an order unit space.   For details, please refer to \cite{KV1, KV2, KV3}.
\begin{corollary}
Let $(V, e)$ be an order unit space. Then for $u_1, u_2 \in V^+ \setminus \{ 0 \}$, we have 
$u_1 \perp_{\infty} u_2$ if and only if $\Vert u_1 \Vert^{-1} u_1 + \Vert u_2 \Vert^{-1} u_2 \le 
e$.
\end{corollary}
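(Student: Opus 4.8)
The plan is to feed everything through Theorem 3.3 and then exploit the special form of the norm in an order unit space. Set $w = \Vert u_1 \Vert^{-1} u_1 + \Vert u_2 \Vert^{-1} u_2$. By the equivalence of statements (1) and (2) in Theorem 3.3, the condition $u_1 \perp_{\infty} u_2$ is the same as $\Vert w \Vert = 1$. Hence the corollary reduces to the single assertion that, for this particular $w \in V^+$, one has $\Vert w \Vert = 1$ if and only if $w \le e$.

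Next I would recall that in an order unit space the norm is given by $\Vert v \Vert = \inf \{ \lambda > 0 : -\lambda e \le v \le \lambda e \}$, and since $V^+$ is closed the infimum is attained; in particular $\Vert v \Vert \le 1$ if and only if $-e \le v \le e$. I would first establish the lower bound $\Vert w \Vert \ge 1$, which holds automatically and requires no hypothesis on $u_1, u_2$. Indeed $w \ge \Vert u_1 \Vert^{-1} u_1 \ge 0$, so applying $(O.\infty.1)$ to the triple $0 \le \Vert u_1 \Vert^{-1} u_1 \le w$ gives $1 = \Vert \Vert u_1 \Vert^{-1} u_1 \Vert \le \max \{ 0, \Vert w \Vert \} = \Vert w \Vert$.

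With $\Vert w \Vert \ge 1$ in hand, the equality $\Vert w \Vert = 1$ becomes equivalent to the inequality $\Vert w \Vert \le 1$. Since $w \in V^+$ we have $-e \le 0 \le w$, so the lower order bound in the criterion $-e \le w \le e$ is free, and $\Vert w \Vert \le 1$ is equivalent to $w \le e$ alone. Chaining the three equivalences, $u_1 \perp_{\infty} u_2 \iff \Vert w \Vert = 1 \iff \Vert w \Vert \le 1 \iff w \le e$, completes the argument. There is no genuine obstacle here; the only points to watch are that the positivity of $w$ is exactly what collapses the two-sided order-unit estimate to the single inequality $w \le e$, and that the lower bound $\Vert w \Vert \ge 1$ comes for free from monotonicity of the norm on the cone rather than from the orthogonality assumption.
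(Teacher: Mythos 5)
Your proposal is correct and follows essentially the same route as the paper: reduce via Theorem 3.3 to the statement $\Vert w\Vert = 1$ for $w = \Vert u_1\Vert^{-1}u_1 + \Vert u_2\Vert^{-1}u_2$, get $\Vert w\Vert \ge 1$ for free from monotonicity of the norm on the cone (the paper uses $0 \le u_1 \le u_1+u_2$ in exactly the same way), and translate $\Vert w\Vert \le 1$ into $w \le e$ using the order-unit characterization of the norm together with closedness of $V^+$. The only cosmetic difference is that you chain equivalences where the paper argues the two implications separately.
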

\begin{proof}
Without any loss of generality, we may assume that $\Vert u_i \Vert = 1$, $i = 1, 2$. First, let
$u_1 \perp_{\infty} u_2$. Then $\Vert u_1 + u_2 \Vert = 1$ so that $u_1 + u_2 \le e$. Conversely, 
let $u_1 + u_2 \le e$. Then $\Vert u_1 + u_2 \Vert \le 1$. Also, $0 \le u_1 \le u_1 + u_2$ so 
that $1 = \Vert u_1 \Vert \le \Vert u_1 + u_2 \Vert$. Thus by Theorem 3.3, $u_1 \perp_{\infty} 
u_2$.
\end{proof}
We can generalize this result to approximate order unit spaces by recalling the fact that 
For $u \in V^+$, we have $\Vert u \Vert \le 1$ if and only if for each $\epsilon > 0$ there is 
$\lambda$ such that $u \le (1 + \epsilon ) e_{\lambda}$.
\begin{corollary}
Let $\big( V, \{ e_{\lambda} \}\big)$ be an order unit space. Then for $u_1, u_2 \in V^+ 
\setminus \{ 0 \}$, we have $u_1 \perp_{\infty} u_2$ if and only if for each $\epsilon > 0$, 
there is $\lambda$ such that $\Vert u_1 \Vert^{-1} u_1 + \Vert u_2 \Vert^{-1} u_2 \le (1 + 
\epsilon) e_{\lambda}$.
\end{corollary}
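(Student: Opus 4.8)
The plan is to transcribe the proof of Corollary 3.8 into the approximate setting, replacing the exact domination $u_1 + u_2 \le e$ by the $\epsilon$-approximate domination supplied by the recalled norm criterion. As there, I would begin by normalizing, assuming without loss of generality that $\Vert u_1 \Vert = \Vert u_2 \Vert = 1$. Under this normalization the quantity in statement (1) of Theorem 3.3 becomes simply $\Vert u_1 + u_2 \Vert$, so that by the equivalence (1) $\Leftrightarrow$ (2) of Theorem 3.3 the relation $u_1 \perp_\infty u_2$ is the same as $\Vert u_1 + u_2 \Vert = 1$. The entire argument therefore reduces to proving that $\Vert u_1 + u_2 \Vert = 1$ holds if and only if for each $\epsilon > 0$ there is $\lambda$ with $u_1 + u_2 \le (1 + \epsilon) e_\lambda$.

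For the forward implication I would assume $u_1 \perp_\infty u_2$, hence $\Vert u_1 + u_2 \Vert = 1$ by Theorem 3.3. Since $u_1 + u_2 \in V^+$ with $\Vert u_1 + u_2 \Vert \le 1$, the recalled fact (for $u \in V^+$, $\Vert u \Vert \le 1$ iff for every $\epsilon > 0$ there is $\lambda$ with $u \le (1 + \epsilon) e_\lambda$) immediately produces the required family of approximate dominations. For the converse I would assume that for each $\epsilon > 0$ there is $\lambda$ with $u_1 + u_2 \le (1 + \epsilon) e_\lambda$; the same recalled fact then gives $\Vert u_1 + u_2 \Vert \le 1$. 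To get the reverse inequality I would invoke the sandwich $0 \le u_1 \le u_1 + u_2$ (valid since $u_2 \in V^+$) together with condition (O.$\infty$.1), which yields $\Vert u_1 \Vert \le \max\{\Vert 0 \Vert, \Vert u_1 + u_2 \Vert\} = \Vert u_1 + u_2 \Vert$. Combining, $1 = \Vert u_1 \Vert \le \Vert u_1 + u_2 \Vert \le 1$, so $\Vert u_1 + u_2 \Vert = 1$ and Theorem 3.3 returns $u_1 \perp_\infty u_2$.

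I do not anticipate a genuine obstacle, since the statement is a soft analogue of Corollary 3.8 obtained by trading the crisp inequality $u_1 + u_2 \le e$ for its $\epsilon$-approximate form. The only points deserving care are applying the recalled norm characterization in both directions and observing that $\Vert 0 \Vert = 0$ when using (O.$\infty$.1), so that the order sandwich genuinely forces $\Vert u_1 \Vert \le \Vert u_1 + u_2 \Vert$; both are routine. One should also note at the outset that an approximate order unit space is an order smooth $\infty$-normed space, which is what licenses the use of Theorem 3.3 and of (O.$\infty$.1) here.
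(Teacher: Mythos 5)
Your proposal is correct and follows exactly the route the paper intends: the paper gives no explicit proof of this corollary, but prefaces it by recalling that for $u \in V^+$ one has $\Vert u \Vert \le 1$ if and only if for every $\epsilon > 0$ there is $\lambda$ with $u \le (1+\epsilon)e_{\lambda}$, and expects the reader to rerun the proof of the order unit case (normalize, reduce $\perp_{\infty}$ to $\Vert u_1 + u_2 \Vert = 1$ via Theorem 3.3, use the sandwich $0 \le u_1 \le u_1 + u_2$ for the lower bound) with that fact replacing the inequality $u_1 + u_2 \le e$. That is precisely what you do, so your argument matches the paper's.
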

Let us say that $u \in V^+ \setminus \{ 0 \}$ has an $\infty$-orthogonal pair in $V^+ \setminus 
\{ 0 \}$ if there exists a $v \in V^+ \setminus \{ 0 \}$ such that $u \perp_{\infty} v$. Note that 
in this case, there exists $f \in V^{\prime +}$ with $\Vert f \Vert = 1$ such that $f(u) = 0$. 
In this case we say that $f$ is a $Crust$ of $u$. the set of all crusts of $u$ will be denoted by 
$Crust_+(u)$.
\begin{corollary}
Let $(V, e)$ be an order unit space and let $u \in V^+ \setminus \{ 0 \}$. Then $u$ has an 
$\infty$-orthogonal pair in $V^+ \setminus \{ 0 \}$ if and only if $Crust_+(u) \not= \emptyset$.
\end{corollary}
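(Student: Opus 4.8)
The plan is to establish the two implications separately; the forward direction is essentially the observation recorded just before the statement, while the reverse direction carries the real content and is where the order unit $e$ is used decisively.

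For the forward implication, suppose $u$ has an $\infty$-orthogonal pair $v \in V^+ \setminus \{ 0 \}$. By Proposition 3.2 the set $Supp_+(v)$ is nonempty, so I would pick $f \in Supp_+(v)$, giving $f \in V^{\prime +}$ with $\Vert f \Vert = 1$. Since $u \perp_{\infty} v$, the computation carried out in the proof of Theorem 3.3 (applied with $u_1 = u$ and $u_2 = v$, which yields $f_2(u_1) = 0$) shows that $f(u) = 0$. Hence $f$ is a crust of $u$ and $Crust_+(u) \neq \emptyset$.

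For the reverse implication, the idea is to manufacture the orthogonal partner directly from the order unit. Normalizing so that $\Vert u \Vert = 1$, I would fix $f \in Crust_+(u)$ and set $v := e - u$. First I would verify $v \in V^+$: since $u \in V^+$ with $\Vert u \Vert = 1$ we have $0 \le u \le e$, so $0 \le e - u$. Recalling the standard order-unit fact that $\Vert f \Vert = f(e)$ for $f \in V^{\prime +}$, we get $f(e) = 1$, whence $f(v) = f(e) - f(u) = 1 \neq 0$; in particular $v \neq 0$. Moreover $0 \le v \le e$ forces $\Vert v \Vert \le 1$, while $f(v) = 1 \le \Vert v \Vert$ forces $\Vert v \Vert = 1$. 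Finally, since $\Vert u \Vert^{-1} u + \Vert v \Vert^{-1} v = u + v = e \le e$, Corollary 3.8 gives $u \perp_{\infty} v$, exhibiting the required pair.

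The single delicate point, and the place where the crust hypothesis is indispensable, is ensuring $e - u \neq 0$. If $u = e$ then $\Vert u \Vert = 1$ yet $e - u = 0$ and the construction collapses; this is entirely consistent, because $e$ admits no crust (any $f \in V^{\prime +}$ with $f(e) = 0$ must vanish identically, by positivity together with $-e \le w \le e$ whenever $\Vert w \Vert \le 1$), and indeed $e$ has no $\infty$-orthogonal pair. Thus the existence of a crust is precisely what guarantees that $e - u$ is a genuine nonzero positive element $\infty$-orthogonal to $u$, which is what makes the equivalence tight; beyond this observation only the routine order-unit estimates above are needed.
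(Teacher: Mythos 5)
Your proof is correct and follows essentially the same route as the paper: the forward direction is the observation preceding the statement, and the converse normalizes $\Vert u \Vert = 1$, takes $v = e - u$, uses the crust $f$ to show $f(e-u) = 1$ so that $\Vert e - u \Vert = 1$ and $e - u \neq 0$, and then applies Corollary 3.8. Your explicit remark about why $e - u \neq 0$ is the delicate point is a helpful clarification but does not change the argument.
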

\begin{proof}
By the above observation, if $u$ has an $\infty$-orthogonal pair in $V^+ \setminus \{ 0 \}$, then 
$Crust_+(u) \not= \emptyset$. Conversely let $Crust_+(u) \not= \emptyset$. Without any loss of 
generality we may assume that $\Vert u \Vert = 1$ so that $0 \le u \le e$ and that $0 \le e - u 
\le e$. Let $f \in Crust_+(u)$. Then $f$ is a state of $V$ with $f(u) =0$. Thus $f(e) = 1$ so that 
$f(e - u) = 1$. It follows that $\Vert e - u \Vert = 1$. Now, by Corollary 3.8, $e-u$ is a 
non-zero $\infty$-orthogonal pair of $u$.
\end{proof}
\begin{remark}
Let $u \in V^+$ with $\Vert u \Vert = 1$ where $V$ is an order unit space. If $v \in V^+$ with 
$\Vert v \Vert = 1$ is such that $u \perp_{\infty} v$, then $v \le e -u$ by Corollary 3.8. Also,
by Corollary 3.10, $u \perp_{\infty} (e - u)$. Thus $e - u$ is the greatest element of $V^+$ of 
norm one that is $\infty$-orthogonal to $u$, if $Crust_+(u) \not= \emptyset$.
\end{remark}

\section{Orthogonality in order smooth $1$-normed spaces}

In the following, we characterize $\perp_1$-orthogonality in order smooth $1$-normed spaces. The 
result is somewhat dual to Theorem 3.3.
\begin{theorem}
  Let $V$ be an order smooth $1$-normed space, $u_1, u_2 \in V^+ \setminus \{ 0 \}$ and $f_i \in Supp_+(u_i)$, $i = 1, 2$. let $W$ be the 
linear span of $u_1, u_2$ and put $g_i = f_i\vert_W$, $i = 1, 2$.. Then the following statements are equivalent:
\begin{enumerate}
 \item $u_1 \perp_1 u_2$ and $g_1(u_2) = 0 = g_2(u_1)$;
 \item $g_1 \perp_{\infty} g_2$. 
\end{enumerate}
\end{theorem}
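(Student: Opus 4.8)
The plan is to reduce the whole equivalence to the elementary duality between the two–dimensional spaces $\ell_1^2$ and $\ell_\infty^2$ carried by $W$. First I would normalise, replacing $u_i$ by $\Vert u_i\Vert^{-1}u_i$ so that $\Vert u_1\Vert=\Vert u_2\Vert=1$. Because $f_i\in Supp_+(u_i)$, restriction to $W$ gives $\Vert g_i\Vert\le\Vert f_i\Vert=1$, while $g_i(u_i)=f_i(u_i)=1$ forces $\Vert g_i\Vert\ge 1$; hence $\Vert g_i\Vert=1$ in both directions. I would also note, exactly as in the paragraph preceding Theorem 2.1, that $u_1\perp_1 u_2$ makes $\{u_1,u_2\}$ linearly independent and, dually, that $g_1\perp_\infty g_2$ makes $\{g_1,g_2\}$ linearly independent. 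Consequently $W$ turns out to be two–dimensional in both directions.

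For the implication $(1)\Rightarrow(2)$, the hypotheses $g_i(u_i)=1$ together with $g_1(u_2)=0=g_2(u_1)$ say precisely that $\{g_1,g_2\}$ is the basis of $W'$ dual to $\{u_1,u_2\}$. Since $u_1\perp_1 u_2$ yields $\Vert\lambda_1 u_1+\lambda_2 u_2\Vert=|\lambda_1|+|\lambda_2|$ for all scalars, $(W,\Vert\cdot\Vert)$ is isometric to $\ell_1^2$ with $u_1,u_2$ the unit vectors. A one–line dual–norm computation then gives
\[
\Vert g_1+k g_2\Vert=\sup\{|\lambda_1+k\lambda_2|:|\lambda_1|+|\lambda_2|\le 1\}=\max\{1,|k|\},
\]
which is exactly $\max\{\Vert g_1\Vert,\Vert k g_2\Vert\}$, i.e. $g_1\perp_\infty g_2$.

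For the converse $(2)\Rightarrow(1)$ I would run this duality backwards. From $g_1\perp_\infty g_2$ and $\Vert g_i\Vert=1$ one obtains $\Vert a g_1+b g_2\Vert=\max\{|a|,|b|\}$, so $(W',\Vert\cdot\Vert)$ is isometric to $\ell_\infty^2$; since $W$ is finite–dimensional and therefore reflexive, the norm on $W=W''$ is the corresponding dual norm, namely $\Vert w\Vert=|g_1(w)|+|g_2(w)|$ for every $w\in W$. The decisive step is then to recover the chosen vectors, not merely the norm: feeding $g_1(u_1)=\Vert u_1\Vert=1$ into this formula gives $1=|g_1(u_1)|+|g_2(u_1)|=1+|g_2(u_1)|$, whence $g_2(u_1)=0$, and symmetrically $g_1(u_2)=0$, which is the off–diagonal part of (1). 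With $g_i(u_j)=\delta_{ij}$ established, $u_1,u_2$ are forced to be the unit vectors of this $\ell_1^2$ structure, so $\Vert u_1+k u_2\Vert=|g_1(u_1+k u_2)|+|g_2(u_1+k u_2)|=1+|k|=\Vert u_1\Vert+\Vert k u_2\Vert$, i.e. $u_1\perp_1 u_2$.

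The hard part will be precisely this last direction: $g_1\perp_\infty g_2$ is a statement about two functionals alone, and it must be transported back to the specified vectors $u_1,u_2$. The device that achieves this is the support identity $f_i(u_i)=\Vert u_i\Vert$, which, once inserted into the $\ell_1^2$ norm formula produced by reflexivity, both pins each $u_i$ to the matching dual–basis vector and kills the cross terms $g_i(u_j)$. Positivity of the $f_i$ enters only through Proposition 3.2, guaranteeing that $Supp_+(u_i)\ne\emptyset$ so that such supports exist; the computation itself is order–free, which is what makes the result a clean mirror of Theorem 3.3.
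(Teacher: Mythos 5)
Your proof is correct, but it takes a genuinely different route from the paper's, most visibly in the harder direction $(2)\Rightarrow(1)$. The paper never writes down a bidual norm formula; instead it squeezes the chain $\lambda_1 g_1(u_1)+\lambda_2 g_2(u_2)\le (g_1+g_2)(\lambda_1 u_1+\lambda_2 u_2)\le \Vert g_1+g_2\Vert\,\Vert\lambda_1 u_1+\lambda_2 u_2\Vert\le\lambda_1+\lambda_2$ for $\lambda_1,\lambda_2>0$, where the first inequality requires $g_i(u_j)\ge 0$, i.e.\ the positivity of the $f_i$ and the $u_j$; equality throughout kills the cross terms and gives additivity of the norm on positive combinations, and a second squeeze of $(g_1-g_2)(\lambda_1 u_1-\lambda_2 u_2)$ (using $\Vert g_1-g_2\Vert\le 1$, which follows from $g_1\perp_\infty g_2$) handles the mixed signs. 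Your route through reflexivity of the two-dimensional $W$ --- identifying $W'$ isometrically with $\ell_\infty^2$ via the basis $\{g_1,g_2\}$ and reading the exact formula $\Vert w\Vert=|g_1(w)|+|g_2(w)|$ off the double dual --- is order-free in this direction and yields slightly more (an explicit isometry $W\cong\ell_1^2$), at the cost of invoking duality of finite-dimensional normed spaces; the paper's squeeze is more elementary but leans essentially on positivity. In the direction $(1)\Rightarrow(2)$ the difference is smaller but still real: the paper computes only $\Vert g_1+g_2\Vert=1$ and then invokes Theorem 3.3 to upgrade this to $g_1\perp_\infty g_2$ (which tacitly requires the order and norm on $W'$ to satisfy the hypotheses of that theorem), whereas you verify $\Vert g_1+kg_2\Vert=\max\{1,|k|\}$ for every scalar $k$ directly from the $\ell_1^2$ description of $W$, so no appeal to Theorem 3.3 is needed. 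Both arguments are sound; yours is the more self-contained, the paper's stays closer to the order-theoretic machinery it has already built.
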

\begin{proof}
 Without any loss on generality we may assume that $\Vert u_i \Vert = 1$, $i = 1, 2$. Further note that $\Vert g_i \Vert = 1$, $i = 1, 2$. 

First assume that $u_1 \perp_1 u_2$ and $g_1(u_2) = 0 = g_2(u_1)$. Then
\begin{align*}
\Vert g_1 + g_2 \Vert &= \sup \{ \vert ( g_1 + g_2 ) (\lambda_1 u_1 + \lambda_2 u_2 )\vert :
\Vert \lambda_1 u_1 + \lambda_2 u_2 \Vert \le 1 \}\\
&= \sup \{ \vert \lambda_1 + \lambda_2 \vert : \vert \lambda_1 \vert + \vert \lambda_2 \vert \le 1 \} = 1.
\end{align*}
Thus by Theorem 3.3, $g_1 \perp_{\infty} g_2$.

Conversely, assume that $g_1 \perp_{\infty} g_2$. Since $\Vert u_i \Vert = 1$ we have $g_i(u_i) = 1$, $i = 1, 2$. Thus for $\lambda_1, 
\lambda_2 > 0$
\begin{align*}
\lambda_1 + \lambda_2 &= \lambda_1 g_1(u_1) + \lambda_2 g_2(u_2)\\
&\le (g_1 + g_2)(\lambda_1 u_1 + \lambda_2 u_2)\\
&\le \Vert g_1 + g_2 \Vert \Vert \lambda_1 u_1 + \lambda_2 u_2 \Vert\\
&\le \lambda_1 + \lambda_2.
\end{align*}
It follows that $g_1(u_2) = 0 = g_2(u_1)$ and that $\Vert \lambda_1 u_1 + \lambda_2 u_2 \Vert = 
\Vert \lambda_1 u_1 \Vert + \Vert \lambda_2 u_2 \Vert$ for all $\lambda_1, \lambda_2 > 0$. Again, 
for $\lambda_1, \lambda_2 > 0$, we have 
\begin{align*}
\lambda_1 + \lambda_2 &= \lambda_1 g_1(u_1) + \lambda_2 g_2(u_2)\\
&= (g_1 - g_2)(\lambda_1 u_1 - \lambda_2 u_2)\\ 
&\le \Vert g_1 - g_2 \Vert \Vert \lambda_1 u_1 - \lambda_2 u_2 \Vert\\ 
&\le \lambda_1 + \lambda_2.
\end{align*}
Thus $\Vert \lambda_1 u_1 + \lambda_2 u_2 \Vert = \Vert \lambda_1 u_1 \Vert + \Vert \lambda_2 
u_2 \Vert$ for all $\lambda_1, \lambda_2 \in \mathbb{R}$ so that $u_1 \perp_1 u_2$.
\end{proof}
\begin{remark}
Under the assumptions of the above theorem, $g_1 \perp_1 g_2$ whenever $f_1 \perp_1 f_2$. In fact,
as $g_1 + g_2 = (f_1 + f_2)\vert_W$ we have $\Vert g_1 + g_2 \Vert \le \Vert (f_1 + f_2) \Vert = 
1$. Further, as $0 \le g_1 \le g_1 + g_2$ we have $1 = \Vert g_1 \Vert \le \Vert g_1 + g_2 \Vert$.
Thus by Theorem 3.3, $g_1 \perp_1 g_2$. However, we are not sure about the converse.
\end{remark}
Next, we specialize $1$-orthogonality in base normed spaces. First let us recall the definition. 
Let $V$ be a positively generated real ordered vector space. A subset $B$ of $V^+$ is called a 
base for $V^+$, if $B$ is convex, $0 \not\in B$ and for any $u \in V^+\setminus \{ 0 \}$ there exists a unique $k > 0$ and $b \in B$ such that $u = kb$. In this case $B$ determine a seminorm 
$\Vert~\Vert_b$ on $V$ which satisfies $(O.1.1)$ and $(O.1.2)$. We say that $(V, B)$ is a base 
normed space if $\Vert~\Vert_b$ is a norm and $V^+$ is closed in it. Thus a base normed space is 
an order smooth $1$-normed space \cite{KV2, KV3, AK}.
\begin{lemma}
Let $(V, B)$ be a base normed space and let $u_1, u_2 \in V^+\setminus \{ 0 \}$. Suppose that 
$f_i \in Supp_+(u_i)$, $i = 1, 2$. If $f_i(u_j) = 0$ for $i \not= j$, then $u_1 \perp_1 u_2$.
\end{lemma}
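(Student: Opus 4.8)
The plan is to verify the defining identity for $\perp_1$, namely $\Vert u_1 + k u_2 \Vert = \Vert u_1 \Vert + |k| \Vert u_2 \Vert$ for every $k \in \mathbb{R}$, by splitting into the cases $k \ge 0$ and $k < 0$. In a base normed space the base norm is additive on the cone: writing $u = \Vert u \Vert b_1$ and $v = \Vert v \Vert b_2$ with $b_1, b_2 \in B$ for $u, v \in V^+ \setminus \{0\}$, convexity of $B$ shows $u + v \in (\Vert u \Vert + \Vert v \Vert) B$, whence $\Vert u + v \Vert = \Vert u \Vert + \Vert v \Vert$. Consequently, for $k \ge 0$ the element $u_1 + k u_2$ is positive and the identity $\Vert u_1 + k u_2 \Vert = \Vert u_1 \Vert + k \Vert u_2 \Vert$ holds at once, using neither the supports nor the hypothesis $f_i(u_j) = 0$. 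So the whole content of the lemma lies in the case $k < 0$.

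For $k < 0$ I would write $k = -\lambda$ with $\lambda > 0$; the triangle inequality already gives $\Vert u_1 - \lambda u_2 \Vert \le \Vert u_1 \Vert + \lambda \Vert u_2 \Vert$, so it suffices to produce the reverse inequality. Here I would use the given supports to build a single norm-one functional that detects the full length. Set $g = f_1 - f_2 \in V^{\prime}$. Using $f_i(u_i) = \Vert u_i \Vert$ together with the hypothesis $f_1(u_2) = 0 = f_2(u_1)$, one computes
$$g(u_1 - \lambda u_2) = f_1(u_1) - \lambda f_1(u_2) - f_2(u_1) + \lambda f_2(u_2) = \Vert u_1 \Vert + \lambda \Vert u_2 \Vert .$$
Once we know $\Vert g \Vert \le 1$, this forces $\Vert u_1 - \lambda u_2 \Vert \ge g(u_1 - \lambda u_2) = \Vert u_1 \Vert + \lambda \Vert u_2 \Vert$, and combining with the triangle bound finishes the case, and hence the lemma.

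The main obstacle is therefore the single estimate $\Vert g \Vert \le 1$, i.e. $|(f_1 - f_2)(v)| \le \Vert v \Vert$ for all $v \in V$. I would prove this directly from the base structure: since each $f_i \in V^{\prime +}$ has $\Vert f_i \Vert = 1$ and every $b \in B$ satisfies $\Vert b \Vert = 1$, we get $0 \le f_i(b) \le 1$, hence $-1 \le (f_1 - f_2)(b) \le 1$ for all $b \in B$. Given $v$ and $\epsilon > 0$, choose by the definition of the base norm a decomposition $v = \mu_1 b_1 - \mu_2 b_2$ with $b_1, b_2 \in B$, $\mu_1, \mu_2 \ge 0$ and $\mu_1 + \mu_2 \le \Vert v \Vert + \epsilon$; then $|(f_1 - f_2)(v)| \le \mu_1 + \mu_2 \le \Vert v \Vert + \epsilon$, and letting $\epsilon \to 0$ yields $\Vert g \Vert \le 1$. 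Equivalently, one may invoke that $V^{\prime}$ is an order unit space whose order unit $e$ satisfies $0 \le f_i \le e$, so that $-e \le f_1 - f_2 \le e$ and $\Vert g \Vert \le 1$. This duality estimate is the delicate point; the remainder is bookkeeping.
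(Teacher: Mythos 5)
Your proposal is correct and follows essentially the same route as the paper: additivity of the base norm on $V^+$ handles $k \ge 0$, and for $k<0$ the functional $f_1 - f_2$ (of norm at most one, since $0 \le f_i \le e$ in the order unit space $V^{\prime}$) supplies the reverse inequality. The only cosmetic difference is that you also verify $\Vert f_1 - f_2 \Vert \le 1$ directly from the definition of the base norm, whereas the paper simply invokes the order unit structure of $V^{\prime}$.
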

\begin{proof}
Since a base norm is additive on $V^+$, for $\lambda_1, \lambda_2 >0$, we have 
$$\Vert \lambda_1 
u_1 + \lambda_2 u_2 \Vert = \Vert \lambda_1 u_1 \Vert + \Vert \lambda_2 u_2 \Vert .$$ 
Also in general 
$$\Vert \lambda_1 u_1 - \lambda_2 u_2 \Vert \le \Vert \lambda_1 u_1 \Vert + \Vert \lambda_2 u_2 
\Vert$$ 
if $\lambda_1, \lambda_2 >0$.

Since $V$ is a base normed space, $V^{\prime}$ is an order unit space. Thus as $\Vert f_i \Vert 
= 1$, $i = 1, 2$, we get 
$$\Vert f_1 - f_2 \Vert \le \max ( \Vert f_1 \Vert , \Vert f_2 \Vert ) \le 1.$$ 
Also $f_i(u_i) = \Vert u_i \Vert$ for $i = 1, 2$ and $f_i(u_j)  = 0$ if $i \not= j$ so that for 
$\lambda_1, \lambda_2 >0$
\begin{align*}
\Vert \lambda_1 u_1 - \lambda_2 u_2 \Vert &\ge (f_1 - f_2)(\lambda_1 u_1 - \lambda_2 u_2)\\ 
&= \lambda_1 f_1(u_1) + \lambda_2 f_2(u_2)\\
&= \Vert \lambda_1 u_1 \Vert + \Vert \lambda_2 u_2 \Vert .
\end{align*}
Hence $u_1 \perp_1 u_2$.
\end{proof}
As an application, now we prove a duality between a decomposition of a (self-adjoint) element (mapping) as a difference of two positive elements (mappings).
\begin{theorem}
Let $V$ be an order smooth $\infty$-normed space. Assume that for each $v \in V$ there are $v_1,
v_2 \in V^+$ such that $v_1 \perp_{\infty} v_2$ and $v = v_1 - v_2$. Then for each $f \in 
V^{\prime}$ there are $f_1, f_2 \in V^{\prime +}$ such that $f_1 \perp_1 f_2$ and $f = f_1 - f_2$. 
\end{theorem}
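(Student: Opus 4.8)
The plan is to obtain the decomposition itself from the duality already recorded in the paper, and then to upgrade it to a genuine $1$-orthogonal decomposition using the hypothesis on $V$. Since $V$ is an order smooth $\infty$-normed space, Theorem 2.5 tells us that $V^{\prime}$ is an order smooth $1$-normed space satisfying condition $(OS.1.2)$; applied to $f$ this yields $f_1, f_2 \in V^{\prime +}$ with $f = f_1 - f_2$ and $\Vert f_1 \Vert + \Vert f_2 \Vert = \Vert f \Vert$. In particular $\Vert f_1 - f_2 \Vert = \Vert f_1 \Vert + \Vert f_2 \Vert$, so the only thing left to prove is that this equality forces $\Vert f_1 + k f_2 \Vert = \Vert f_1 \Vert + \vert k \vert \Vert f_2 \Vert$ for every $k \in \mathbb{R}$, i.e. $f_1 \perp_1 f_2$. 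We may assume $f_1, f_2 \neq 0$, the degenerate case being trivial, and we always have the upper bound $\Vert f_1 + k f_2 \Vert \le \Vert f_1 \Vert + \vert k \vert \Vert f_2 \Vert$ from the triangle inequality.

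The heart of the argument is to manufacture a single test vector in $V$ that separates $f_1$ and $f_2$. Fix $\epsilon > 0$ and choose $v \in V$ with $\Vert v \Vert \le 1$ and $(f_1 - f_2)(v) \ge \Vert f_1 \Vert + \Vert f_2 \Vert - \epsilon$. Now I would invoke the hypothesis on $V$ to write $v = v_1 - v_2$ with $v_1, v_2 \in V^+$ and $v_1 \perp_{\infty} v_2$. Directly from the definition of $\perp_{\infty}$ (take $k = \pm 1$) one gets $\Vert v_1 + v_2 \Vert = \Vert v_1 - v_2 \Vert = \max (\Vert v_1 \Vert, \Vert v_2 \Vert) = \Vert v \Vert \le 1$; in particular $\Vert v_1 \Vert \le 1$ and $\Vert v_2 \Vert \le 1$, and both $v_1 + v_2$ and $v_1 - v_2$ lie in the closed unit ball of $V$. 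Expanding $(f_1 - f_2)(v_1 - v_2) = f_1(v_1) + f_2(v_2) - f_1(v_2) - f_2(v_1)$ and using $f_i(v_j) \ge 0$ together with $f_i(v_i) \le \Vert f_i \Vert$ (as $\Vert v_i \Vert \le 1$), the near-equality above forces all four nonnegative defects to be at most $\epsilon$: namely $f_1(v_1) \ge \Vert f_1 \Vert - \epsilon$, $f_2(v_2) \ge \Vert f_2 \Vert - \epsilon$, $f_1(v_2) \le \epsilon$ and $f_2(v_1) \le \epsilon$.

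With these four estimates the conclusion is immediate. For $k \ge 0$ I evaluate $f_1 + k f_2$ at $v_1 + v_2$; dropping the nonnegative cross terms gives $(f_1 + k f_2)(v_1 + v_2) \ge f_1(v_1) + k f_2(v_2) \ge \Vert f_1 \Vert + k \Vert f_2 \Vert - (1+k)\epsilon$, so $\Vert f_1 + k f_2 \Vert \ge \Vert f_1 \Vert + k \Vert f_2 \Vert$ after letting $\epsilon \to 0$. For $k < 0$, say $k = -m$ with $m > 0$, I evaluate at $v_1 - v_2$ instead and use all four estimates to obtain $(f_1 - m f_2)(v_1 - v_2) \ge \Vert f_1 \Vert + m \Vert f_2 \Vert - (2 + 2m)\epsilon$. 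Combining each lower bound with the triangle-inequality upper bound gives equality for all $k \in \mathbb{R}$, hence $f_1 \perp_1 f_2$, and $f = f_1 - f_2$ is the desired $1$-orthogonal decomposition.

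The step I expect to be the real obstacle is the middle one: extracting from a single approximate norming vector for $f = f_1 - f_2$ a pair that simultaneously near-norms $f_1$ and $f_2$ while nearly annihilating the crossed pairings. This is exactly where the hypothesis enters in its full strength — not the merely approximate splitting $(O.\infty.2)$ that every order smooth $\infty$-normed space enjoys, but the existence of an \emph{exact} $\infty$-orthogonal decomposition $v = v_1 - v_2$, which keeps both $v_1 + v_2$ and $v_1 - v_2$ inside the unit ball via $\Vert v_1 \pm v_2 \Vert = \max(\Vert v_1 \Vert, \Vert v_2 \Vert)$. One should also note that it is precisely the positivity of $f_1$ and $f_2$ that converts the single scalar near-equality into the four separate defect bounds; no uniqueness of the decomposition is claimed or used, consistently with the non-uniqueness example promised at the end of the paper.
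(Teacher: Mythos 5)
Your proof is correct, and it follows the same overall strategy as the paper: obtain $f=f_1-f_2$ with $\Vert f_1\Vert+\Vert f_2\Vert=\Vert f\Vert$ from the duality (the condition $(OS.1.2)$ on $V'$, which the paper's proof cites as $(OS.1.1)$, apparently a typo), then test this pair against an $\infty$-orthogonal decomposition $v=v_1-v_2$ of a norming vector for $f$ to extract $f_i(v_j)=0$ for $i\ne j$ and $f_i(v_i)=\Vert f_i\Vert$. The two executions differ in a way worth noting. The paper chooses $v$ with $\Vert v\Vert=1$ and $f(v)=\Vert f\Vert$ \emph{exactly}; this is a genuine gap, since a functional on a nonreflexive space need not attain its norm (e.g.\ $V=c_0$, which satisfies the hypotheses, and $f=(2^{-n})\in\ell_1$), whereas your $\epsilon$-approximate norming vector always exists and your four defect estimates $f_1(v_2),f_2(v_1)\le\epsilon$, $f_i(v_i)\ge\Vert f_i\Vert-\epsilon$ degrade gracefully and recover the conclusion in the limit. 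You also close the argument differently: the paper invokes Theorem 4.1 together with Remark 4.2 (passing through restrictions to the span of $f_1,f_2$), while you verify $\Vert f_1+kf_2\Vert=\Vert f_1\Vert+\vert k\vert\,\Vert f_2\Vert$ directly by evaluating at $v_1+v_2$ and $v_1-v_2$, both of which lie in the unit ball because $\Vert v_1\pm v_2\Vert=\max(\Vert v_1\Vert,\Vert v_2\Vert)=\Vert v\Vert$. Your version is therefore both more self-contained and strictly more rigorous than the one in the paper.
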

\begin{proof}
Let $f \in V^{\prime}$. Without any loss of generality we may assume that $f \not\in V^{\prime +} 
\cap (- V^{\prime +})$. Since $V$ be an order smooth $\infty$-normed space, $V^{\prime}$ is an 
order smooth $1$-normed space satisfying $(OS.1.1)$. Thus there are $f_1, f_2 \in V^{\prime +} 
\setminus \{ 0 \}$ such that $f = f_1 - f_2$ and $\Vert f \Vert = \Vert f_1 \Vert + \Vert f_2 
\Vert$. Find $v \in V$ with $\vert v \Vert = 1$ such that $\Vert f \Vert = f(v)$. By assumption,
there are $v_1, v_2 \in V^+$ such that $v_1 \perp_{\infty} v_2$ and $v = v_1 - v_2$. Thus
$1 = \Vert v \Vert = \max \{ \Vert v_1 \Vert , \Vert v_2 \Vert \}$. Now
\begin{align*}
\Vert f \Vert = f(v) &= (f_1 - f_2)(v_1 - v_2)\\
&= \left( f_1(v_1) + f_2(v_2) \right) - \left( f_1(v_2) + f_2(v_1) \right)\\
&\le \left( f_1(v_1) + f_2(v_2) \right)\\
&\le \Vert f_1 \Vert \Vert v_1 \Vert + \Vert f_2 \Vert v_2 \Vert\\
&\le \Vert f_1 \Vert + \Vert f_2 \Vert = \Vert f \Vert .
\end{align*}
\noindent so that $f_1(v_2) = 0 = f_2(v_1)$, $\Vert v_1 \Vert = 1 = \Vert v_2 \Vert$ and 
$\Vert f_i \Vert = f_i(v_i)$, $i = 1, 2$. Thus it follows from Theorem 4.1 and Remark 4.2 that 
$f_1 \perp_1 f_2$.
\end{proof}
\begin{remark}

We are not able to prove a converse of this decomposition. We suspect that this may attribute to 
not uniqueness of such decomposition as shown in the next example. We expect some additional 
conditions in this regard. 
\end{remark}
\begin{example}
Consider $f(x) = \cos x \in C [0, 2\pi ]$. Then $f = f^+ - f^-$ with $f^+ \perp_{\infty} f^-$.
Next, let $g_1 (x) = \cos ^2 (\frac{1}{2} x)$ and $g_2 (x) = \sin ^2 (\frac{1}{2} x)$. Again, then 
$f = g_1 - g_2$ with $g_1 \perp_{\infty} g_2$.
\end{example}

\bibliographystyle{amsplain}

\end{document}